\documentclass{amsart}

\usepackage{amsmath, enumerate, amsfonts, amssymb, amsthm, graphics, graphicx, verbatim, caption, subcaption}

\usepackage[english]{babel}
\usepackage[margin=0.95in]{geometry}
\usepackage{tikz, float}
\usetikzlibrary{shapes}
\tikzstyle{subgroup}=[scale=1]

\newtheorem{thm}{Theorem}[section]
\newtheorem{prop}[thm]{Proposition}
\newtheorem{cor}[thm]{Corollary}
\newtheorem{lem}[thm]{Lemma}

\theoremstyle{definition}

\newtheorem{cons}[thm]{Construction}

\def\Z#1{\textrm{Z}(#1)}
\def\CD#1{\mathcal{CD}(#1)}
\def\cdm#1{m^*(#1)}

\def\ord#1{\vert #1 \vert}
\newcommand{\iso}{\cong}
\def\wt#1{\widetilde{#1}}
\def\MM#1{\mathcal{M}_{#1}}
\newcommand{\LA}{\left\langle}
\newcommand{\RA}{\right\rangle}

\pagestyle{myheadings}


\title{\bf Chermak-Delgado Lattice Extension Theorems}

\author{Lijian An}
\address{Lijian An, Shanxi Normal University, Department of Mathematics \\ Linfen, China 041004}
\email{sxlf\_alj@163.com}

\author{Joseph Brennan}
\address{Joseph Brennan, Binghamton University, Department of Mathematical Sciences, Binghamton, New York 13902}
\email{jbrennan@binghamton.edu}

\author{Haipeng Qu}
\address{Haipeng Qu, Shanxi Normal University, Department of Mathematics, Linfen, China 041004}
\email{orcawhale@163.com}

\author{Elizabeth Wilcox}
\address{Elizabeth Wilcox, State University of New York at Oswego, Mathematics Department, Oswego, New York 13126-3599}
\email{elizabeth.wilcox@oswego.edu}

\date{\today}

\begin{document}


\begin{abstract}
If $G$ is a finite group with subgroup $H$ then the {\it Chermak-Delgado measure of $H$ (in $G$)} is defined as $\ord H \ord {C_G(H)}$.  The Chermak-Delgado lattice of $G$, denoted $\CD G$, is the set of all subgroups with maximal Chermak-Delgado measure; this set is a sublattice within the subgroup lattice of $G$.  In this paper we provide an example of a $p$-group $P$, for any prime $p$, where $\CD P$ is lattice isomorphic to $2$ copies of $\MM {4}$ (a quasiantichain of width $2$) that are adjoined maximum-to-minimum.  We introduce terminology to describe this structure, called a $2$-string of $2$-diamonds, and we also give two constructions for generalizing the example.  The first generalization results in a $p$-group with Chermak-Delgado lattice that, for any positive integers $n$ and $l$, is a $2l$-string of $n$-dimensional cubes adjoined maximum-to-minimum and the second generalization gives a construction for a $p$-group with Chermak-Delgado lattice that is a $2l$-string of $\MM {p+3}$ (quasiantichains, each of width $p + 1$) adjoined maximum-to-minimum.
\end{abstract}

\maketitle

The Chermak-Delgado measure was originally defined by A. Chermak and A. Delgado as one in a family of functions from the subgroup lattice of a finite group into the positive integers.  I. Martin Isaacs re-examined one of these function, dubbed it the Chermak-Delgado measure, and proved that subgroups with maximal Chermak-Delgado measure form a sublattice in the subgroup lattice of the group.  B. Brewster and E. Wilcox then demonstrated that, for a direct product, this Chermak-Delgado lattice decomposes as the direct product of the Chermak-Delgado lattices of the factors, giving rise to the attention on the Chermak-Delgado lattice of $p$-groups (for a prime $p$) in this paper and others.

The variety seen in the structure of the Chermak-Delgado lattices of $p$-groups seems inexhaustible; for example, there are many $p$-groups with a Chermak-Delgado lattice that is a single subgroup, a chain of arbitrary length, or a quasiantichain of width $p + 1$.  In this paper we show that for any non-abelian $p$-group $N$ with $N$ is in its own Chermak-Delgado measure and $\Phi (N) \leq \Z N$, there exist two $p$-groups $\mathcal{LE}(m,n)$ and $\mathcal{QE}(n)$ with similar properties and such that the Chermak-Delgado lattices of $\mathcal{LE}(m,n)$ and $\mathcal{QE}(n)$ are the Chermak-Delgado lattice of $N$ with either a $m$-diamond or a quasiantichain of width $p + 1$ (respectively) adjoined at both the maximum and minimum subgroups in the Chermak-Delgado lattice of $N$.

\

Let $G$ be a finite group and $H \leq G$.  The {\it Chermak-Delgado measure of $H$ (in $G$)} is $m_G(H) = \ord H \ord {C_G(H)}$.  When $G$ is clear from context we write simply $m(H)$.  For the maximum Chermak-Delgado measure possible in $G$ we write $m^*(G)$ and we use $\CD G$ to denote the set of all subgroups $H$ with $m(H) = m^*(G)$. The proof that this set is actually a modular sublattice in the lattice of subgroups of $G$, called the {\it Chermak-Delgado lattice of $G$}, can be found in \cite{cd1989} and is also discussed in \cite[Section 1G]{Isaacs}. 

Of particular note regarding $\CD G$ are the properties: If $H, K \in \CD G$ then $\langle H, K \rangle = HK$, $C_G(H) \in \CD G$, and also $C_G(C_G(H)) = H$.  This latter property is typically referred to as the ``duality property'' of the Chermak-Delgado lattice.  It is also known that the maximum subgroup in $\CD G$ is characteristic and the minimum subgroup is characteristic, abelian, and contains $\Z G$.

To describe the lattices constructed wherein we introduce the following terms for a positive integer $n$. A {\it quasiantichain of width $n$} will be denoted by $\mathcal{M}_{n+2}$.  An {\it $n$-diamond} is a lattice with subgroups in the configuration of an $n$-dimensional cube. These structures form the most common {\it components} used in this paper, though ``component'' can refer to a lattice of any configuration. A {\it (uniform) $n$-string} is a lattice with $n$ lattice isomorphic components, adjoined end-to-end so that the maximum of one component is identified with the minimum of the other component.  A {\it mixed $n$-string} is a lattice with $n$ components adjoined in the same fashion, though with at least one component not lattice isomorphic to the remaining components.


%
\begin{figure}[t]
\centering
\begin{subfigure}[b]{0.3\textwidth}
	\centering
	\caption{A $3$-string of $2$-diamonds.}
\begin{tikzpicture}
\node (ZP) at (1,0) [subgroup] {$\bullet$};
\node (AO) at (0,1) [subgroup] {$\bullet$};
\node (AE) at (2,1) [subgroup] {$\bullet$};
\node (A) at (1,2) [subgroup] {$\bullet$};
\node (CAO) at (0,3) [subgroup] {$\bullet$};
\node (CAE) at (2,3) [subgroup] {$\bullet$};
\node (P) at (1,4) [subgroup] {$\bullet$};
\node (M1) at (0,5) [subgroup] {$\bullet$};
\node (M2) at (2,5) [subgroup] {$\bullet$};
\node (M) at (1,6) [subgroup] {$\bullet$};
\draw (ZP) to (AE);
\draw (ZP) to (AO);
\draw (AO) to (A);
\draw (AE) to (A);
\draw (A) to (CAO);
\draw (A) to (CAE);
\draw (CAO) to (P);
\draw (CAE) to (P);
\draw (P) to (M1);
\draw (P) to (M2);
\draw (M1) to (M);
\draw (M2) to (M);
\end{tikzpicture}
\end{subfigure}
\begin{subfigure}[b]{0.3\textwidth}
	\centering
	\caption{A mixed $3$-string with two $\MM {7}$ components and a chain of length $2$.}
\begin{tikzpicture}
\node (ZP) at (2,0) [subgroup] {$\bullet$};
\node (A11) at (0,1) [subgroup] {$\bullet$};
\node (A12) at (1,1) [subgroup] {$\bullet$};
\node (A13) at (2,1) [subgroup] {$\bullet$};
\node (A14) at (3,1) [subgroup] {$\bullet$};
\node (A15) at (4,1) [subgroup] {$\bullet$};
\node (P) at (2,2) [subgroup] {$\bullet$};
\node (D) at (2,3) [subgroup] {$\bullet$};
\node (ZM) at (2,4) [subgroup] {$\bullet$};
\node (M) at (2,6) [subgroup] {$\bullet$};
\node (B11) at (0,5) [subgroup] {$\bullet$};
\node (B12) at (1,5) [subgroup] {$\bullet$};
\node (B13) at (2,5) [subgroup] {$\bullet$};
\node (B14) at (3,5) [subgroup] {$\bullet$};
\node (B15) at (4,5) [subgroup] {$\bullet$};
\draw (ZP) to (A11);
\draw (ZP) to (A12);
\draw (ZP) to (A13);
\draw (ZP) to (A14);
\draw (ZP) to (A15);
\draw (P) to (A11);
\draw (P) to (A12);
\draw (P) to (A13);
\draw (P) to (A14);
\draw (P) to (A15);
\draw (ZM) to (B11);
\draw (ZM) to (B12);
\draw (ZM) to (B13);
\draw (ZM) to (B14);
\draw (ZM) to (B15)
;\draw (M) to (B11);
\draw (M) to (B12);
\draw (M) to (B13);
\draw (M) to (B14);
\draw (M) to (B15);
\draw (ZM) to (D);
\draw (P) to (D);
\end{tikzpicture}
\end{subfigure}
\begin{subfigure}[b]{0.3\textwidth}
\centering
\caption{A $2$-string of $3$-diamonds.}
\begin{tikzpicture}
\node (ZP) at (1,0) [subgroup] {$\bullet$};
\node (A11) at (0,1) [subgroup] {$\bullet$};
\node (A12) at (1,1) [subgroup] {$\bullet$};
\node (A13) at (2,1) [subgroup] {$\bullet$};
\node (A21) at (0,2) [subgroup] {$\bullet$};
\node (A22) at (1,2) [subgroup] {$\bullet$};
\node (A23) at (2,2) [subgroup] {$\bullet$};
\node (P) at (1,3) [subgroup] {$\bullet$};
\draw (ZP) to (A11);
\draw (ZP) to (A12);
\draw (P) to (A23);
\draw (P) to (A21);
\draw (P) to (A22);
\draw (ZP) to (A13);
\draw (A11) to (A21);
\draw (A11) to (A22);
\draw (A12) to (A21);
\draw (A12) to (A23);
\draw (A13) to (A23);
\draw (A13) to (A22);
\node (B11) at (0,4) [subgroup] {$\bullet$};
\node (B12) at (1,4) [subgroup] {$\bullet$};
\node (B13) at (2,4) [subgroup] {$\bullet$};
\node (B21) at (0,5) [subgroup] {$\bullet$};
\node (B22) at (1,5) [subgroup] {$\bullet$};
\node (B23) at (2,5) [subgroup] {$\bullet$};
\node (M) at (1,6) [subgroup] {$\bullet$};
\draw (P) to (B11);
\draw (P) to (B12);
\draw (M) to (B23);
\draw (M) to (B21);
\draw (M) to (B22);
\draw (P) to (B13);
\draw (B11) to (B21);
\draw (B11) to (B22);
\draw (B12) to (B21);
\draw (B12) to (B23);
\draw (B13) to (B23);
\draw (B13) to (B22);
\end{tikzpicture}
\end{subfigure}
\caption{Examples of uniform and mixed strings with quasiantichain, chain, and $m$-diamond components.}
\end{figure}
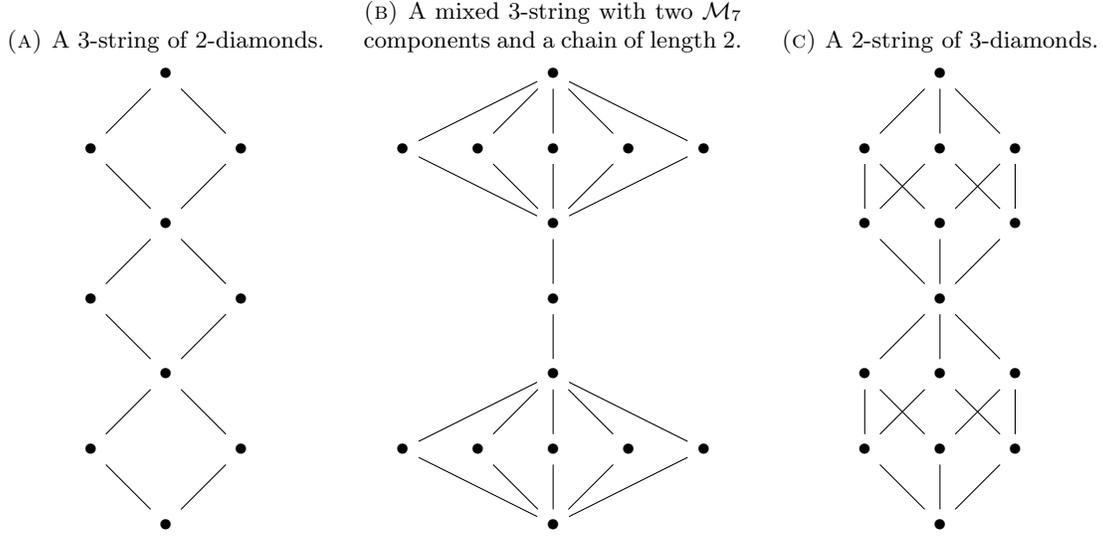

In Section~\ref{DD} we produce an example of a $2$-string of $2$-diamonds, laying the foundation for the proofs in later sections.  In Sections~\ref{nDiamonds} and \ref{qac} we start with a positive integer $n$ and a non-abelian $p$-group $N$ with $N \in \CD N$ and $\Phi(N) \leq \Z N$ and create a group with Chermak-Delgado lattice that is a mixed $n+1$-string with $\CD N$ as the center component.  In Section~\ref{nDiamonds} the remaining $n$ components are $m$-diamonds for a fixed $m \geq 2$ and in Section~\ref{qac} the remaining $n$ components are $\MM {p+3}$.  As a corollary we show there exists a $p$-group $P$ with $P \in \CD P$ and $\Phi (P) \leq \Z P$ such that $\CD P$ is a $2l$-string of $m$-diamonds or $\MM {p+3}$ for all positive integers $l$.  We additionally describe circumstances under which a $2l+1$-string may be constructed.

\section{Example: A $2$-String of $2$-Diamonds}\label{DD}

To give the flavor of the techniques used in later sections, we present the original construction that motivated the main theorems of the paper:  the construction of a $p$-group $P$ with $\CD P$ a $2$-string of $2$-diamonds that contains $P$.  

\begin{thm}\label{example} For any prime $p$ there exists a $p$-group $P$ with $\CD P$ a $2$-string of $2$-diamonds, meaning that $\CD P = \{ \Z P, A_1, A_2, A, AB_1, AB_2, P \}$ where $\Z P < A_i < A < AB_j < P$ for $1 \leq i, j \leq 2$. Moreover, $\Phi(P) \leq \Z P$. \end{thm}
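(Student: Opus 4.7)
The plan is to construct $P$ explicitly as a class-$2$ $p$-group and verify the $\CD{P}$ structure by analyzing subspaces of $P/\Z{P}$. I would take $P$ to be generated by $a_1, a_2, b_1, b_2$ of order $p$ together with independent central generators spanning $\Z{P}$, and specify the commutators $[a_i, b_j]$ as specific elements of $\Z{P}$. The relations are chosen so that $P$ has nilpotency class $2$ with $P^p \leq \Z{P}$, giving $\Phi(P) \leq \Z{P}$; so that $\bar P := P/\Z{P} \iso \mathbb{F}_p^4$; and so that the plane $\bar A := \langle \bar a_1, \bar a_2 \rangle$ is self-perpendicular with respect to the induced alternating pairing $[\,,\,] : \bar P \times \bar P \to \Z{P}$, with the restricted ``determinant'' quadratic form on $\bar A$ factoring as a product of two distinct linear forms over $\mathbb{F}_p$.

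Next I would identify the seven candidate subgroups. Let $A = \langle a_1, a_2, \Z{P}\rangle$, so that $\bar A$ self-perpendicular makes $A$ abelian and $A = C_P(A)$, giving $m(A) = \ord{A}^2$. Let $A_i = \langle v_i, \Z{P}\rangle$ where $\bar v_1, \bar v_2 \in \bar A$ represent the two distinct lines in $\bar A$ on which $[\bar v_i, \cdot] : \bar P \to \Z{P}$ has rank $1$ (the roots of the determinant above), and let $AB_j = C_P(A_j)$. Direct computation with the commutator table shows each of these seven subgroups has measure $\cdm{P} = \ord{\Z{P}}\cdot \ord{P} = \ord{A}^2$, and that the containments $\Z{P} < A_i < A < AB_j < P$ together with $A_1 \cap A_2 = \Z{P}$, $A_1 A_2 = A$, $AB_1 \cap AB_2 = A$, and $AB_1 \cdot AB_2 = P$ hold, yielding the 2-string of 2-diamonds structure.

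The main obstacle is showing no other subgroup attains measure $\cdm{P}$. Since $m(H) \leq m(H\Z{P})$ with equality iff $\Z{P} \leq H$, it suffices to consider $H \supseteq \Z{P}$, which correspond to subspaces $\bar H \leq \bar P$; then $H \in \CD{P}$ iff $\dim \bar H + \dim \bar H^\perp = 4$ for $\perp$ taken under the pairing. I would argue by cases on $\dim \bar H \in \{0,1,2,3,4\}$: dimensions $0$ and $4$ yield $\Z{P}$ and $P$ trivially; in dimension $1$ one must show $\langle \bar v_1\rangle$ and $\langle \bar v_2\rangle$ are the only lines with $[\bar v, \cdot]$ of rank $1$, so in particular no rank-$1$ line exists outside $\bar A$; in dimension $2$ one must show $\bar A$ is the unique self-perpendicular plane; and the case $\dim \bar H = 3$ follows by the CD-lattice duality, each such subgroup being the dual of an atom. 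The technical crux is the dimension $1$ and $2$ analysis: the commutator pairing must be rigid enough to avoid extra rank-$1$ lines or ``cross''-type self-perpendicular planes such as would appear, for instance, in a direct product of two extraspecial groups.
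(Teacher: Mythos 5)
Your framework---reading $\CD P$ off the alternating commutator pairing $c$ on $\bar P = P/\Z P$---is sound and is essentially the mechanism hidden inside the paper's counting arguments. But the specific group you propose cannot exist: with only four non-central generators, so that $\bar P \iso \mathbb{F}_p^4$ and the atoms $A_i$ have index $p$ over $\Z P$, the lattice you want is unobtainable for \emph{every} choice of the pairing, not just for insufficiently ``rigid'' ones. The obstruction sits exactly at the step you flagged as the technical crux. If $\langle \bar v_1\rangle$ is a rank-$1$ line (which you need in order to get $m(A_1) = \ord P \ord{\Z P}$), then $\bar v_1^{\perp}$ is a $3$-dimensional subspace containing $\bar v_1$, and for every $\bar u \in \bar v_1^{\perp}$ the plane $\langle \bar v_1, \bar u \rangle$ is self-perpendicular, because the form is alternating and $c(\bar v_1, \bar u) = 0$. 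That yields $p+1 \geq 3$ self-perpendicular planes through $\bar v_1$, and each corresponding subgroup $B$ satisfies $\ord B \ord{C_P(B)} \geq p^4 \ord{\Z P}^2 = \ord P \ord{\Z P}$. So either $m^*(P) > \ord P \ord {\Z P}$, in which case none of your seven candidate subgroups lies in $\CD P$, or all $p+1$ of these planes lie in $\CD P$; in either case $\CD P$ is not a $2$-string of $2$-diamonds, which admits only the single element $A$ covering $A_1$. A concrete instance: in the four-generator version of the paper's relations, $\langle a_1, b_1 \rangle \Z P$ is abelian and self-centralizing with measure $\ord P \ord{\Z P}$, giving an eighth member of $\CD P$.

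The paper avoids this by inflating the atoms rather than sharpening the pairing: its Construction 1.2 uses $4m$ generators with $m \geq 2$, takes $A_1 = \langle a_1, a_3, \dots, a_{2m-1}\rangle \Z P$ of index $p^m$ over the center, and eliminates all stray subgroups via the estimate $\ord{H : \Z P} \leq p^{2m} < p^{3m-1} = \ord{P : C_P(x)}$ for any $x$ involving a $b$-generator---an inequality that degenerates to equality, and hence fails, precisely when $m = 1$. Your outline can be repaired, but only by replacing the lines $\langle \bar v_i \rangle$ with $m$-dimensional subspaces for some $m \geq 2$ and redoing the dimension analysis accordingly; as written, the dimension-one and dimension-two cases of your case analysis are not merely unproved but false.
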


\begin{cons}\label{doublediamond}
For any integer $m > 1$, let $P$ be the group generated by $\{a_i, b_i \mid 1\leq i \leq 2m\}$ subject to the defining relations:
\begin{gather*}
[a_i,a_j]^p = [a_i,b_j]^p = a_i^p = b_i^p = 1 \textrm{ for all $1 \leq i, j\leq 2m$},\\
[a_i,b_j] \neq 1 \textrm{ for $i \not\equiv j$ mod $2$, } [b_i,b_j] \neq 1 \textrm{ for all $1\leq i, j \leq 2m$},\\
\textrm{all other commutators between generators equal 1, and}\\
\textrm{all commutators are in $\Z P$.}\\
\end{gather*}\end{cons}

From the definition it's clear that $\Phi (P) \leq \Z P$ and $\Z P$ is elementary abelian.  Counting the non-trivial commutator relations gives $\ord {\Z P} = p^{4m^2 - m}$ and $\ord P = p^{4m^2 + 3m}$. Define the following subgroups of $P$: 
$$
\begin{array}{lll}
A = \langle a_i \mid 1\leq i \leq 2m \rangle Z(P), && A_1 = \langle a_{2i-1} \mid 1\leq i \leq m \rangle Z(P),\\
A_2 = \langle a_{2i} \mid 1\leq i \leq m \rangle Z(P), && B_1 = \langle b_{2i-1} \mid 1\leq i \leq m \rangle Z(P), \textrm{ and}\\
B_2 = \langle b_{2i} \mid 1\leq i \leq m \rangle Z(P). &&\\
\end{array} 
$$
It's straightforward to verify $C_P(A_1) = AB_1$ and $C_P(A_2) = AB_2$.  Notice, too, that the maximal abelian subgroups have order less than or equal to $|A|$. If we let $z$ be the integer (dependent upon $m$) such that $\ord {\Z P} = p^z$ then observe the following orders:
$$
\ord A = p^{2m + z}, \quad |A_0| = |A_1| = p^{m+z}, \quad \textrm{ and } \quad \ord {AB_1} = \ord {AB_2} = p^{3m+z}.
$$
Therefore the Chermak-Delgado measures of the above groups are all equal, yielding $\cdm P \geq \ord P \ord {\Z P}$. We show this is exactly $m^*(P)$, thereby establishing the theorem.

\

\noindent
{\it Proof of Theorem~\ref{example}.} To prove the theorem we first establish that the minimal subgroup in $\CD P$ is a subgroup of $A$, then determine it must be exactly one of $\Z P$, $A_1$, $A_2$, or $A$.  All of these have the same Chermak-Delgado measure; therefore we establish $m^*(P)$ and use duality to finish the proof.

To begin, let $H \leq P$ be such that $\ord H \leq \ord A$ and $H \in \CD P$.  Suppose $x \in H$ can be written $x=ry$ where $r\in \{b_i \mid 1\leq i \leq 2m\}$ and $y \in \LA \{a_i, b_i\mid 1\leq i \leq 2m\} - \{r\}\RA Z(P)$. The center of $P$ is elementary abelian and the non-trivial commutators of the generators are linearly independent generators of $\Z P$, therefore
$$ \ord{P:C_P(x)} = \ord{x^P} = \ord{[x,P]} \geq \ord{[r,P]} = \ord{r^P} = \ord{P:C_P(r)}. $$
Counting the generators which stabilize $r$ under conjugation gives
$$ |C_P(x)| \leq |C_P(r)| = \frac{|P|}{p^{3m-1}}.$$

That $H \in \CD P$ and $x\in H$ together imply
$$ |H||C_P(x)| \geq |H||C_P(H)| \geq |P||Z(P)|. $$
From $m>1$ and as $\ord H \leq \ord A$ we obtain a contradiction:
$$ p^{2m} = |A:Z(P)| \geq \ord {H : \Z P} \geq |P:C_P(x)| = p^{3m-1};$$
thus $H\leq A$.

Yet the orders of abelian subgroups in $P$ are bounded above by $\ord A$; therefore the minimal subgroup in $\CD P$ must be a subgroup of $A$.  To determine this minimal subgroup, let $H \in \CD P$ be such that $\Z P < H \leq A$.  If $a_1y_1 \in H$ for $y_1 \in \LA a_i\mid 2\leq i \leq 2m\RA Z(P)$ and $a_{2}y_2 \in H$ for $y_2 \in \LA a_1, a_i\mid 3\leq i \leq 2m \RA Z(P)$ then $C_P(H) \leq A$.  Notice that if $z = b_1^{k_1} \cdots b_{2m}^{k_m}a \in C_P(H)$ for integers $k_i$ and $a\in A$ then commutator calculations in a group with nilpotence class 2 give the following implications:
\begin{gather*}
[a_1y_1,z] = 1 \Rightarrow k_{2i} = 0 \mbox{ for } 1\leq i \leq m \quad \textrm{and}\\
[a_2y_2,z] = 1 \Rightarrow k_{2i-1} = 0 \mbox{ for } 1\leq i \leq m.
\end{gather*}
This generalizes for any $a_k$ with $k$ odd and $a_j$ with $j$ even. Since $C_P(H)\leq A$ and $H < A$,
$$ |H||C_P(H)| \leq |A|^2 = |P||Z(P)| \leq m^*(P);$$
equality holds exactly when $H = A$.

We may now conclude that if $ \Z P < H< A$ then $H \leq A_1$ or $H \leq A_2$. If the former then $C_P(H) = AB_1$ and in the latter case $C_P(H) = AB_2$; therefore $m(H) \leq m(A_1) = m^*(P)$, with equality exactly when $H = A_1$ or $H=A_2$.

Since $A$, $A_1$, $A_2$, and $\Z P$ all have the same Chermak-Delgado measure, all of these subgroups are in $\CD P$.  From the duality of the Chermak-Delgado lattice the centralizers $AB_1$, $AB_2$, and $P$ are also in $\CD P$.  Additionally the duality gives that there can be no subgroups $H \in \CD P$ with $A < H < P$ besides those already described, completing the proof of the theorem.\qed

\

It is worth noting that $\ord {A_1} = \ord {A_2}$ is not necessary for achieving a $2$-string of $2$-diamonds.  If we instead let $A_1 = \langle a_i \mid 1 \leq i \leq n - 1 \rangle$ and $A_2 = \langle a_i \mid n \leq i \leq 2n \rangle$, and similarly adjust the commutativity relations so that $C_P(A_1) = \langle b_1 \mid n \leq i \leq 2n \rangle A_1$ and $C_P(A_2) = \langle b_i \mid 1 \leq i \leq n - 1 \rangle A_2$, then the proof still holds.  The result is a Chermak-Delgado lattice that is $2$-string of $2$-diamonds where the subgroups in the diamonds each have distinct order.

\section{$m$-Diamond Lattice Extension Theorem}\label{nDiamonds}

\begin{thm}\label{DiamondExt} Let $N$ be a $p$-group such that $N \in \CD N$ and $\Phi(N) \leq \Z N$.  For any integers $m \geq 1$ and $n \geq 2$ there exists a $p$-group $\mathcal{LE}(m,n)$ and a normal embedding of $N$ into $P$, resulting in $\CD P$ being a mixed $3$-string with center component isomorphic to $\CD N$ and the remaining components being $m$-diamonds. \end{thm}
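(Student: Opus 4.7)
The plan is to generalize Construction~\ref{doublediamond}: define $\mathcal{LE}(m,n)$ as a class-two $p$-group built from $N$ by adjoining two families of ``diamond-generators'' $\{a_{i,j}\}$ and $\{b_{i,j}\}$, $1 \le i \le m$ and $1 \le j \le n$, each of order $p$, with all commutators lying in the center. The $a$- and $b$-generators are required to commute with every element of $N$; the $a$'s commute among themselves; $[a_{i,j}, b_{i',j'}] \ne 1$ precisely when $i \ne i'$; and the $b$'s pairwise fail to commute. By construction $\Phi(P) \le \Z P$, the center $\Z P$ is elementary abelian, $\Z N \le \Z P$, and $N \nor P$.

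Next I would identify the candidates for $\CD P$. For the bottom $m$-diamond set $A_i = \langle a_{i,1},\ldots,a_{i,n}\rangle \Z P$ and $A = A_1 \cdots A_m$; for the center component, to each $H \in \CD N$ associate $\widetilde H = AH$, with extremes $\widetilde{N_{\min}} = A$ and $\widetilde N = AN$; for the top $m$-diamond take the centralizers $C_P(A_i) = (AN)B_i$ where $B_i = \langle b_{i,1},\ldots,b_{i,n}\rangle \Z P$, together with their intersections and $P$ itself. Counting independent commutator relations yields $\ord{\Z P}$ and $\ord P$, and a direct calculation verifies that each listed subgroup has Chermak-Delgado measure exactly $\ord P \ord{\Z P}$. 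This simultaneously places them in $\CD P$ and establishes $\cdm P \ge \ord P \ord{\Z P}$.

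The core of the argument is the reverse inclusion, mirroring the three-step structure of the proof of Theorem~\ref{example}. First, for $H \in \CD P$ with $\ord H \le \ord{AN}$, show that $H \le AN$: a stray $b$-component in some $x \in H$ would, by the linear independence of $\{[b_{i,j}, P]\}$ inside the elementary abelian $\Z P$, force $\ord{C_P(x)}$ small enough to violate $\ord H \ord{C_P(H)} \ge \cdm P$. Second, writing $H = A_0 H_0$ with $A_0 \le A$ and $H_0 \le N$, use $N \in \CD N$ together with $C_P(A) = AN$ to force either $H$ into the bottom diamond $\{\Z P, A_1, \ldots, A_m, A\}$ or $H = AK$ for some $K \in \CD N$; a short check that $K \mapsto AK$ preserves joins and meets then identifies this layer with $\CD N$. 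Third, dualize via $H \mapsto C_P(H)$ to transfer these possibilities into their mirrored counterparts above $AN$, producing the top $m$-diamond and exhausting $\CD P$.

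The main obstacle lies in the parameter-counting of the first step: the centralizer estimate must cleanly separate the commutators arising from $N$ (which live in $\Z P$) from those involving the adjoined generators, and the hypothesis $n \ge 2$ is exactly what is needed so that the analogue of the gap $p^{2m} < p^{3m-1}$ from Theorem~\ref{example} is maintained. A related subtlety is the identification $\widetilde{N_{\min}} = A$; the construction must be arranged---possibly by including a copy of $N_{\min}$ in the definition of $A$, or by exploiting that the minimum subgroup of $\CD N$ is central in its centralizer---so that $A \cap N$ is precisely $N_{\min}$ and $H \mapsto AH$ is a well-defined lattice isomorphism from $\CD N$ onto the middle component.
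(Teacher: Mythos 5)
Your overall plan---adjoin $a$- and $b$-generators to $N$, compute $m(P)=\ord P\ord{\Z P}$ for the candidate subgroups, prove the reverse bound in three layers, and finish by duality---is the same architecture as the paper's. But your construction itself has a genuine gap: you stipulate that the $b$-generators commute with every element of $N$, whereas in the paper's Construction~\ref{GemExtension} each $b_{ij}$ fails to commute with every generator $x_k$ of $N$ modulo its center, via new linearly independent central commutators $[x_k,b_{ij}]=z_{ijk}$. This is not cosmetic. With $[N,b_{i,j}]=1$, write $p^r=\ord{N:\Z N}$; then $\ord{N\Z P}=p^r\ord{\Z P}$ while $C_P(N\Z P)=C_P(N)$ contains all the $a$'s and all the $b$'s, so $\ord{C_P(N)}=\ord P/p^r$ and $m(N\Z P)=\ord P\ord{\Z P}$, exactly the value you establish as $\cdm P$. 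For non-abelian $N$ this puts $N\Z P$ into $\CD P$; it contains no part of $A$ and is incomparable with the subgroups $A_\Delta$, and its joins and meets with them generate further members, so $\CD P$ is not a mixed $3$-string. The same stipulation breaks your first reduction step: $C_P(b_{i,j})$ now contains all of $N$, so $\ord{P:C_P(b_{i,j})}=p^{2mn-n-1}$ while $\ord{P:\wt A}=p^{r+mn}$, and the inequality $\ord{C_P(H)}\le\ord{C_P(b_{i,j})}<\ord{\wt A}$ that is supposed to force $H\le AN$ fails whenever $r\ge mn-n-1$, i.e.\ for all sufficiently large $N$. The paper avoids both problems precisely because every element of $P-\wt N$ moves some $x_k$, which is what makes $C_P(\wt N)=\wt A$ and forces any $H$ centralized by something outside $\wt N$ down into $\wt A$.

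Two smaller points. Your rule $[a_{i,j},b_{i',j'}]\ne 1$ exactly when $i\ne i'$ makes every $a_{1,j}$ central when $m=1$, collapsing both diamonds, whereas the theorem allows $m\ge 1$; the paper's convention $[a_{ti},b_{tj}]=\wt z_{ijt}\ne 1$ handles $m=1$ correctly. And the bottom component you list, $\{\Z P,A_1,\dots,A_m,A\}$, is the quasiantichain $\MM{m+2}$ rather than an $m$-diamond; you need all the intermediate products $A_\Delta=\prod_{i\in\Delta}A_i\Z P$ for $\Delta\subseteq\{1,\dots,m\}$, as in the paper's family $\wt A_{\Delta_k}$. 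Your treatment of the middle layer via the decomposition $H=A_0H_0$ is also shakier than the paper's route (an arbitrary subgroup of $AN$ need not factor this way); the paper instead shows $C_P(H)=C_{\wt N}(H)$ for $\wt A<H<\wt N$ and then quotes the direct-product theorem for Chermak--Delgado lattices to identify the middle component with $\CD N$.
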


\begin{cons}\label{GemExtension} Choose $m \geq 1$ and $n \geq 2$.  
\begin{enumerate}
\item For all $i, j$ such that $1 \leq j \leq n$ and $1 \leq i \leq m$, choose distinct $a_{ij}$ with order $p$ and define $A$ to be the direct product of all $\langle a_{ij} \rangle$.


\item Suppose that $N / \Z N = \langle x_1 \rangle \Z N \times \cdots \times \langle x_r \rangle \Z N$ and choose distinct $z_{ijr}$ of order $p$ for $1 \leq i \leq m$, $1 \leq j \leq n$, and $1 \leq k \leq r$.  For all $i, j, t$ such that $1 \leq i, j \leq n$ and $1 \leq t \leq m$, choose distinct $\wt z_{ijt}$ with order $p$. For every $u$ and $v$ with $(1,1) \leq u < v \leq (m,n)$ under the lexicographic ordering, choose a distinct $z_{v}^{u}$ of order $p$. From these generators define:
$$
Z_N = \prod\limits_{\substack{1 \leq i \leq m\\ 1 \leq j \leq n\\ 1 \leq k \leq r}} \langle z_{ijk} \rangle,  \qquad 
Z_A = \prod\limits_{\substack{1 \leq i \leq n\\ 1 \leq j \leq n\\ 1 \leq t \leq m}} \langle \wt z_{ijt} \rangle, \qquad \textrm{ and } \quad  Z_B = \prod\limits_{(1,1) \leq u < v \leq (m,n)} \langle z_{v}^{u} \rangle.
$$
Define $Z = Z_A \times Z_N \times Z_B$ and $\wt N = N \times Z \times A$.  Notice that $\Z {\wt N} = \Z N \times Z \times A$.  Let $\wt A = \Z {\wt N}$.
\item For each $i, j$ such that $1 \leq i \leq m$ and $1 \leq j \leq n$, choose distinct $b_{ij}$ with order $p$.  Define $P = \wt N \rtimes \langle b_{ij} \mid 1 \leq i \leq m, 1 \leq j \leq n \rangle$.  Under this construction the following conjugation relations are observed:\end{enumerate}\end{cons}
\vspace{-10 pt}
\begin{gather*}
[x_k, b_{ij}] = z_{ijk} \textrm{ for all $i, j, k$}, [z, b_{ij}] = 1 \textrm{ for all $z \in \Z N \times Z$},\\
[b_{u},b_{v}] = z_v^u \textrm{ for all $(1,1) \leq u < v \leq (m,n)$},\\
[a_{i'j'},b_{ij}] = 1 \textrm{ for all $i' \ne i$, and } [a_{ti},b_{tj}] = \wt z_{ijt} \textrm{ for all $i$, $j$, $t$}.\\
\end{gather*}

\vspace{-10 pt}
We prove that $P$ defined here exactly fits the requirements for the group $\mathcal{LE}(m,n)$ described in Theorem~\ref{GemExtension}. From the construction of $P$ it's clear that $\Phi (P) \leq \Z P = \Z {\wt N} \times Z$ and $\Phi (P) = \Phi (N) \times Z$, and also that $\Z P$ is elementary abelian.   By counting generators and commutators, one can determine that the exponents on $\ord {P / \Z P}$ and $\ord {\Z P}$ are $2mn + r$ and $\frac{1}{2} mn(2n + 2r + mn - 1) + z$, respectively, where $\ord {\Z N} = p^z$.  Additionally observe $\ord {\wt A} \ord {\wt N} = \ord P \ord {\Z P}$, where $C_P(\wt A) = \wt N$ and $C_P(\wt N) = \wt A$.  This gives that $m(P) = m(\wt N) \leq m^*(P)$.  

To establish the structure of $\CD P$ we define additional subgroups of $\wt A$ and their centralizers in $P$.  For $k$ with $0 \leq k \leq m$, let $\Delta_k$ be a $k$-subset of $\Omega = \{1, 2, \dots, m\}$ and let $A_{\Delta_k} = \langle a_{ij} \mid i \in \Delta_k, 1 \leq j \leq n \rangle$.  Let $\wt {A}_{\Delta_k} = A_{\Delta_k}\Z P$ and $\wt {A}_k = \{ \wt {A}_{\Delta_k} \mid \Delta_k \textrm{ a $k$-subset of } \Omega \}$.  From this definition it is clear that $\wt A_k$ has precisely $\binom{m}{k}$ subgroups. Moreover, any subgroup in $\wt{A}_k$ has a centralizer of the form $\wt{B}_{\Delta_k} = B_{\Delta_k^c} \wt N$ where $B_{\Delta_k^c} = \langle b_{ij} \mid i \not\in \Delta_k, 1 \leq j \leq n \rangle$.

Notice that $m(\wt A_{\Delta}) = m(\wt A)$ for all $\Delta \subseteq \Omega$.  Ultimately we will show that $m^*(P) = m(P)$ and, for every $k$ with $1 \leq k \leq m$, the set $\wt {A}_k \subset \CD P$. This gives the bottom component, an $m$-diamond, in the $3$-string.  The centralizers $C_P(\wt A_{\Delta})$ where $\Delta \subseteq \Omega$ give the second $m$-diamond that forms the top component, part of $\CD P$ under the duality property. Notice $\CD N \iso \CD {\wt N}$, as $\wt N$ is the direct products of $N$ by abelian groups \cite{bw2012}, which will give the center component of $\CD P$.

We begin by examining the subgroups in $\CD P$ with order no greater than $\ord {\wt A}$.

\begin{prop}\label{lemma1} If $H \in \CD P$ and $\ord H \leq \ord {\wt A}$ then there exists some $\Delta \subseteq \Omega$ such that $H = \wt A_{\Delta}$. \end{prop}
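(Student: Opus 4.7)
The plan follows the three-step pattern of Theorem~\ref{example}. From $m(\wt A) = |\wt A||\wt N| = |P||\Z P|$ and $m^*(P) \ge m(\wt A)$, together with $|H| \le |\wt A|$, every $x \in H$ satisfies $|P : C_P(x)| \le |P : C_P(H)| \le p^{mn}$, and hence the $\mathbb{F}_p$-linear map $[x,\cdot] \colon P/\Z P \to \Z P$ has rank at most $mn$. The decisive structural observation is that the image of the commutator bracket splits, as an $\mathbb{F}_p$-vector space, into the four independent summands $[N,N]$, $Z_N$, $Z_A$, and $Z_B$, and each commutator relation of the construction lands in exactly one of these; this will let me isolate rank contributions from the $b$-, $N$-, and $A$-parts of $x$ and thereby violate the ceiling $mn$ whenever $x$ lies outside $\wt A$. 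I expect the technical heart to be Steps~1 and~2, where the rank lower bound must be extracted despite the superposition of contributions from the various parts of $x$; the resolution is to project onto a summand in which only one such part can contribute, ruling out cancellations a priori.

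\emph{Step 1: $H \le \wt N$.} Suppose $x = b^{\vec c} y \in H$ with $y \in \wt N$ and $c_{i_0 j_0} \ne 0$. Since $y$ contributes no $z^u_v$-commutator (these arise only from $[b,b]$), the $Z_B$-projection of $[x, b_{kl}]$ agrees with that of $[b^{\vec c}, b_{kl}]$. For each $(k,l) \ne (i_0, j_0)$, the distinct $z_v^u$-generator indexed by the pair $\{(i_0,j_0),(k,l)\}$ has nonzero coefficient $\pm c_{i_0 j_0}$ (no other index pair produces this generator), yielding $mn - 1$ linearly independent images. Likewise the $Z_A$-projections of $[x, a_{i_0 j'}]$ for $j' = 1, \dots, n$ contribute $n$ more independent vectors, and the $Z_N$-projections of $[x, x_s]$ for $s = 1, \dots, r$ contribute $r$ more. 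Summing across disjoint summands, rank $[x,\cdot] \ge (mn - 1) + n + r \ge mn + 1$ since $n \ge 2$, contradicting the ceiling.

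\emph{Step 2: $H \le \wt A$.} With $H \le \wt N$, consider $x = x_N \wt a \in H$ with $x_N \in N$, $\wt a \in \wt A$, and $x_N \notin \Z N$; write $x_N \equiv \prod_s x_s^{\alpha_s} \pmod{\Z N}$ with some $\alpha_{s_0} \ne 0$. The $Z_N$-projection of $[x, b_{kl}] = [x_N, b_{kl}][\wt a, b_{kl}]$ equals $\sum_s \alpha_s z_{kls}$, whose coefficient $\alpha_{s_0}$ on the distinct generator $z_{kl s_0}$ is nonzero for every $(k,l)$; since the generators $z_{kl s_0}$ are distinct as $(k,l)$ varies, these $mn$ images are linearly independent inside $Z_N$. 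Also $[x, x_s] = [x_N, x_s]$ lies in $[N,N]$ and is nonzero for some $s$ (as $x_N \notin \Z N$), and since $[N,N] \cap Z_N = 0$ this provides an extra independent direction. Hence rank $\ge mn + 1$, contradiction, so $x \in \wt A$.

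\emph{Step 3: Classification.} Since the minimum of $\CD P$ is characteristic abelian and contains $\Z P$, $H \ge \Z P$, and as $A \cap \Z P = 1$ we may write $H = H' \Z P$ with $H' = H \cap A$. Identify $A$ with $\bigoplus_{i=1}^m \langle a_{ij}\rangle_{j=1}^n$, let $\pi_i$ denote projection onto the $i$-th summand, and set $\Delta = \{i : \pi_i(H') \ne 0\} \subseteq \Omega$. Using $[a_{ij}, b_{i'l}] = \wt z_{jli}$ when $i = i'$ and $1$ otherwise, a short computation shows that $b^{\vec \gamma} n \in C_P(H')$ iff $\gamma_{il} = 0$ for every $i \in \Delta$ and every $l$; hence $C_P(H) = B_{\Delta^c} \wt N$, and $m(H) = |H'| \cdot |\Z P| \cdot p^{n(m - |\Delta|)} \cdot |\wt N|$. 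Since $H' \le A_\Delta$ forces $|H'| \le p^{n|\Delta|}$, $m(H) \le m(\wt A) \le m^*(P)$; the equality $m(H) = m^*(P)$ then forces $H' = A_\Delta$, i.e., $H = \wt A_\Delta$.
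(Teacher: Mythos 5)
Your proof is correct, but it takes a genuinely different route from the paper's. The paper argues on the centralizer side: it takes $x = wy \in C_P(H) - \wt N$, computes $C_P(w) = \langle w \rangle \wt A_{\Omega - \{u\}}$, and uses the duality $H = C_P(C_P(H)) \leq C_P(x)$ to trap $H$, handling a borderline sub-case ($\ord{C_P(H)} = p\ord{\wt N}$, forcing $n = 2$) separately and then running an induction that strips indices out of $\Delta$ one at a time before a final measure comparison pins down $H = \wt A_{\Delta}$. You instead work with elements $x \in H$ directly, converting $\ord{P : C_P(x)} \leq \ord{\wt A}/\ord{\Z P} = p^{mn}$ into a rank ceiling for the linear map $[x,\cdot]$ and beating that ceiling by counting independent images in the separate summands $Z_B$, $Z_A$, $Z_N$, and $[N,N]$ whenever $x$ has a nontrivial $b$-part (Step 1, giving $(mn-1)+n+r > mn$ since $n \geq 2$) or a noncentral $N$-part (Step 2, giving $mn + 1$); your projection order ($Z_B$ first, then $Z_A$, then $Z_N$) correctly kills the cross-terms. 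This eliminates both the $n=2$ sub-case and the induction, and choosing $\Delta$ as the exact support of $H \cap A$ makes the classification fall out of one computation of $C_P(H) = B_{\Delta^c}\wt N$ together with $\ord{H \cap A} \leq p^{n \ord{\Delta}}$. The only cosmetic imprecision is describing $[x,\cdot]$ as a map into $\Z P$ qua $\mathbb{F}_p$-space; strictly the image lies in $P'$, which is elementary abelian because $P$ has class $2$ and $\Phi(P) \leq \Z P$ (the paper makes the same identification). What the paper's route buys is that its centralizer computations are reused verbatim in Lemma~\ref{lemma2}; what yours buys is a shorter, case-free argument.
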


\begin{proof} Let $H \in \CD P$ and suppose $\ord H \leq \ord {\wt A}$; then $m^*(P) = m(H) \geq m(\wt A) = \ord {\wt A} \ord {\wt N}$ yields $\ord {C_P(H)} \geq \ord {\wt N}$.  If $C_P(H) = \wt N$ then $H = C_P(\wt N) = \wt A$, using the duality of the Chermak-Delgado lattice.  In the following arguments we assume that $C_P(H) \ne \wt N$ and so there exists $x \in C_P(H) - \wt N$.  

Suppose that $x = wy$ where $w \in \{b_{ij} \mid 1 \leq i \leq m, 1 \leq j \leq n\}$ and $y \in \langle \{b_{ij}\} - \{w\} \rangle \wt N$.  The center of $P$ is elementary abelian and and the non-trivial commutators of generators of $P$ are linearly independent generators of $\Z P$; therefore
$$
\ord {P:C_P(x)} = \ord {x^P} = \ord {[x,P]} \geq \ord {[w,P]}.
$$
Equality holds if and only if $y \in \wt N$; this can be verified straightforwardly using the bilinearity of commutators in $P$.  Thus $C_p(x) \leq C_p(w)$; we calculate $C_P(w)$ in order to place an upper bound on $\ord {C_P(x)}$.  

Fix $u, v$ so that $w = b_{uv}$ and let $\Delta = \Omega - \{u\}$.  The only generators of $P$ that commute with $w$ are precisely $w$ itself and those $a_{ij}$ where $i \ne u$.  Therefore $C_P(w) = \langle w \rangle \wt A_{\Delta}$.  The duality property of the Chermak-Delgado lattice yields: $H = C_P(C_P(H)) \leq C_P(x) \leq \langle w \rangle \wt A_{\Delta}$. Therefore $\ord H \leq p \ord {\wt A_{\Delta}}$.  Recalling that $m(H) \geq \ord {\wt A} \ord {\wt N}$, one may observe that 
$$
\ord {C_P(H)} \geq \frac{\ord {\wt A} \ord {\wt N}}{p \ord {\wt A_{\Delta}}} = p^{n-1} \ord {\wt N}.
$$
If $\ord {C_P(H)} = p \ord {\wt N}$ then $n = 2$.  Thus, since $m(H) \geq \ord {\wt A} \ord {\wt N}$, we can say $\ord H p \geq \ord {\wt A}$.  Yet $H \leq \langle w \rangle A_{\Delta}$ and $\ord {A:A_{\Delta}} = p^2$; therefore $H = \langle w \rangle A_{\Delta}$ and 
$$
C_P(H) = C_P(w) \cap C_P(A_{\Delta}) \leq C_P(x) = H.
$$
Thus $\ord {C_P(H)} \leq \ord {H} \leq \ord {\wt A}$, contradicting the choice of $H$.

Suppose instead that $\ord {C_P(H)} > p \ord {\wt N}$; there exists an $x' = w'y' \in H$ where $w' \in \{ b_{ij} \mid 1 \leq i \leq m, 1 \leq j \leq n\} - \{w\}$ and $y' \in \langle \{b_{ij}\} - \{w'\} \rangle \wt N$. Apply the previous argument to $x'$, arriving at $H \leq \langle x' \rangle \wt A_{\Delta'}$ (where $\wt A_{\Delta'} \in \wt A_{m-1}$, possibly $\Delta = \Delta'$).  This gives $H \leq C_P(w) \cap C_P(w') \leq \wt A_{\Delta}$, which implies $H \leq \wt A$.

Therefore if $H \in \CD P$ and $\ord H \leq \ord {\wt A}$ then $H = \wt A$ or $H \leq \wt A_{\Delta}$ where $\ord {\Delta} = m - 1$. We prove by induction that if $H \in \CD P$ and $H \leq \wt A$ then $H = \wt A_{\Delta}$, where $\Delta$ is now any subset of $\Omega$.  Assume that $\Delta$ is any subset of $\Omega$ such that $H \in \CD P$ but $H < \wt A_{\Delta}$.  

In this case $\ord H \ord {C_P(H)} \geq \ord {\wt A_{\Delta}} \ord {\wt B_{\Delta}}$, since $\wt B_{\Delta} = C_P(\wt A_{\Delta})$.  By order considerations, it's clear that $\ord {C_P(H)} > \ord {\wt B_{\Delta}}$.  Yet $\wt B_{\Delta} < C_P(H)$ because $H \leq \wt A_{\Delta}$, so there must exist an element $x \in C_P(H) - \wt B_{\Delta}$.  Therefore, without loss of generality, there exists $i' \in \Delta$ and $j$ with $1 \leq j \leq n$ such that $x = b_{i'j}y$ for some $y \in \langle b_{ij} \mid i \ne i', 1 \leq j \leq n \rangle \wt N$.  Notice that the bilinearity of the commutator in $P$ gives
$$
1 = [h, b_{i'j}y] = [h, b_{i'j}][h,y] \textrm{ for all } h \in H \implies 1 = [h, b_{i'j}]  \textrm{ for all } h \in H.$$
Thus no element of $H$, written as a product of generators of $\wt A$, contains $a_{i'j}$ as a factor.  Therefore $H \leq \wt A_{\Delta-\{i'\}}$.

By induction, we know that if $H \in \CD P$ and $\ord H \leq \ord {\wt A}$ then $H \leq \wt A_{\Delta}$, for some $\Delta \subseteq \Omega$.  However, if $\ord {\wt A_{\Delta - \{i\}}} < \ord H \leq \ord {A_{\Delta}}$ for any $i \in \Delta$ then $C_P(H) = \wt B_{\Delta}$.  Therefore $m(H) \leq m(\wt A_{\Delta})$ with equality if and only if $H = \wt A_{\Delta}$.  This proves that if $H \in \CD P$ and $\ord H \leq \ord {\wt A}$ then $H = \wt A_{\Delta}$ for some $\Delta \subseteq \Omega$. \end{proof}

In the proof of Theorem~\ref{DiamondExt} we show that there is a subgroup of order less than $\wt A$ in $\CD P$, thereby Proposition~\ref{lemma1} automatically generates an $m$-diamond above the minimal subgroup in $\CD P$.  To prove this, and also to describe the structure of $\CD P$ above the $m$-diamond, we prove:

\begin{lem}\label{lemma2}If $H \in \CD P$ and $\ord H \leq \ord {\wt N}$ then $H \leq \wt N$. \end{lem}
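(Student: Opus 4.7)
The plan is to argue by contradiction in the spirit of the proof of Proposition~\ref{lemma1}, now applied to an element of $H$ lying outside $\wt N$. Suppose $H \in \CD P$ satisfies $\ord H \leq \ord{\wt N}$ and $H \not\leq \wt N$, and fix $h \in H \setminus \wt N$. The quotient $P/\wt N$ is elementary abelian of order $p^{mn}$ with basis the images of the $b_{ij}$, so one can write $h = wy$ with $w = \prod b_{ij}^{\gamma_{ij}} \notin \wt N$ and $y \in \wt N$; in particular some $\gamma_{ij} \neq 0$, and the index set $\Delta = \{ i : \gamma_{ij} \neq 0 \text{ for some } j\}$ is a nonempty subset of $\Omega$.

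The heart of the argument is a lower bound on $\ord{[h,P]}$. Since $P$ has nilpotence class two, the commutator $[h, \cdot]$ descends to a linear map $P/\Z P \to \Z P$, and by construction the commutator generators $z_{ijk}$, $\wt z_{j', j, i}$, and $z_v^u$ are linearly independent in $\Z P$ and sit in the independent direct factors $Z_N$, $Z_A$, and $Z_B$. The commutator relations of Construction~\ref{GemExtension} then produce three families of contributions that are independent across these three factors. First, $\{[h, x_k] : 1 \leq k \leq r\}$ has $Z_N$-part $\prod_{i,j} z_{ijk}^{-\gamma_{ij}}$, which is nonzero and lies in pairwise independent subfactors of $Z_N$ as $k$ varies, giving $r$ independent elements. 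Second, since $A$ centralizes $N$ and $Z$ inside $\wt N$, one has $[h, a_{ij}] = \prod_{j'} \wt z_{j, j', i}^{-\gamma_{ij'}}$, and for $i \in \Delta$ with $1 \leq j \leq n$ these produce $n|\Delta|$ independent elements of $Z_A$. Third, projecting $\{[h, b_{ij}]\}$ onto $Z_B$ recovers the alternating pairing $\bar s \mapsto [\bar w, \bar s]$ on $T/Z_B \cong \mathbb{F}_p^{mn}$, where $T = \langle b_{ij}\rangle$; upon identifying $Z_B$ with $\wedge^2(T/Z_B)$ this is the exterior-square map $\bar s \mapsto \bar w \wedge \bar s$, which has rank $mn - 1$ when $\bar w \neq 0$. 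Combining yields $\ord{[h, P]} \geq p^{r + n|\Delta| + mn - 1}$ and therefore $\ord{C_P(h)} \leq \ord P / p^{r + n|\Delta| + mn - 1}$.

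To finish, use $C_P(H) \leq C_P(h)$ and the defining equality $\ord H \ord{C_P(H)} = \ord{\wt A}\ord{\wt N}$ (valid because $H \in \CD P$) to obtain $\ord H \geq \ord{\wt A}\ord{\wt N}/\ord{C_P(h)}$. Substituting $\ord P = \ord{\wt N}\cdot p^{mn}$ and the identity $\ord{\wt N}/\ord{\wt A} = p^r$ (from $\wt N / \wt A \iso N/\Z N$ being elementary abelian of rank $r$), one obtains $\ord H \geq \ord{\wt N}\cdot p^{n|\Delta| - 1}$. Because $n \geq 2$ and $|\Delta| \geq 1$ force $n|\Delta| \geq 2$, this contradicts $\ord H \leq \ord{\wt N}$.

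The principal obstacle is the third assertion above: that the commutator pairing on $T/Z_B$ has rank $mn - 1$ at every nonzero vector. This reduces to the linear independence of the $\binom{mn}{2}$ generators $z_v^u$ of $Z_B$ together with the elementary fact that the exterior-square map $\bar s \mapsto \bar w \wedge \bar s$ on $\mathbb{F}_p^{mn}$ has kernel exactly $\langle \bar w \rangle$ when $\bar w \neq 0$. The independence checks underlying the first two families of contributions are direct consequences of the commutator relations in Construction~\ref{GemExtension}.
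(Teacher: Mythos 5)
Your proof is correct and follows essentially the same strategy as the paper's: take $h = wy \in H \setminus \wt N$ and derive a contradiction from the fact that $\ord {C_P(h)}$ is too small, using the linear independence of the commutator generators in $\Z P$ (the paper gets by with the coarser bound $\ord {[h,P]} \geq \ord {[b_{uv},P]}$ for a single $b_{uv}$ occurring in $w$, which already yields $\ord {C_P(H)} \leq \ord {\wt A}/p^{n-1} < \ord {\wt A}$, whereas you compute the full rank of the commutator map). One minor imprecision: at this stage only $\ord H \ord {C_P(H)} = m^*(P) \geq \ord {\wt A}\ord {\wt N}$ is known rather than equality, but that inequality is all your final step actually uses.
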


\begin{proof} Let $H \in \CD P$ with $\ord H \leq \ord {\wt N}$, so that $m^*(P) \geq \ord {\wt A} \ord {\wt N}$ implies $\ord {C_P(H)} \geq \ord {\wt A}$.  By way of contradiction, suppose that there exists $x \in H - \wt N$. Then there exists $w \in \{b_{ij} \mid 1 \leq i \leq m, 1 \leq j \leq n\}$ and $y \in \langle \{ b_{ij} \} - \{w\} \rangle \wt N$ such that $x = wy$.  As in the proof of Proposition~\ref{lemma1}, we note that
$$
\ord {P:C_P(x)} =\ord {x^P} = \ord {[x,P]} \geq \ord {[w, P]}
$$
 and therefore $C_P(H) \leq C_P(w)$.  Fix $u, v$ so that $w = b_{uv}$ and let $\Delta = \Omega - \{u\}$.  The only generators that commute with $w$ are precisely $w$ itself and those $a_{ij}$ where $i \ne u$, and therefore $C_P(w) = \langle w \rangle \wt A_{\Delta}$.  Thus 
$$
\ord {C_P(H)} \leq \ord {C_P(x)} \leq \ord {\langle w \rangle \wt A_{\Delta}} = \frac{\ord {\wt A}}{p^{n-1}}.
$$
This contradicts the earlier statement that $\ord {C_P(H)} \geq \ord {\wt A}$; therefore if $H \in \CD P$ and $\ord H \leq \ord {\wt N}$ then $H \leq \wt N$. \end{proof}

These two lemmas are enough to prove that $P$ has the desired Chermak-Delgado lattice.

\

\noindent
{\it Proof of Theorem~\ref{DiamondExt}.} Let $P$ be as described above; we first consider abelian subgroups in $\CD P$.  Let $H\in \CD P$ be abelian and assume, by way of contradiction, that $\ord H > \ord {\wt N}$. There exists an element $b_{i'j}y \in H$ such that $y \in \langle \{b_{ij} \mid 1 \leq i \leq m, 1 \leq j \leq n \} - \{b_{i'j}\} \rangle \wt N$. As $C_P(b_{i'j}) = A_{\Delta}$ for $\Delta = \Omega - \{i'\}$, it follows that $H\not\leq C_P(H)$.  Thus if $H \in \CD P$ is abelian then $H \leq \wt N$. 

We show that if $H \in \CD P$ and $H \leq \wt N$ then $m(H) = m(P)$, and since the minimal member of $\CD P$ is a subgroup of $\wt N$ this is enough to establish that $m^*(P) = m(P)$. Proposition~\ref{lemma1} already shows that if $H \in \CD P$ with $H \leq \wt A$ then $m_P(H) = m_P(P)$, so we consider the case where $\wt A < H < \wt N$.

Let $H \leq P$ with $C_P(H) \not\leq \wt N$; then there exists $b_{i'j}y \in C_P(H)-\wt N$ with $y \in \langle \{ b_{ij} \mid 1 \leq i \leq m, 1 \leq j \leq n \} - \{b_{i'j}\} \rangle \wt N$. It follows that every $h \in H$ must be an element of $\wt A_{\Delta}$ where $\Delta \subseteq \Omega - \{i'\}$.  Therefore if $H \in \CD P$ with $\ord {\wt A} < \ord H < \ord {\wt N}$ then $C_P(H) = C_{\wt N}(H)$ and $m^*(P) = m_P(H) = m_{\wt N}(H)$.  However, $m_P(H) = \ord {\wt A} \ord {\wt N} = m_{\wt N} (\wt N) = m^*(\wt N)$, by designation of $N \in \CD N$ (and hence $\wt N \in \CD {\wt N})$.  This gives $m_P(H) = m_P(\wt N) = m_P(P)$. 

Therefore $m^*(P) = \ord {\wt N} \ord {\wt A}$.  This implies that if $H \in \CD {\wt N}$ then $m_{\wt N} (H) = m_P(H)$ and hence $H \in \CD P$.  Therefore $\{\wt A_{\Delta}, \wt B_{\Delta} \mid \Delta \subseteq \Omega \} \cup \CD {\wt N} \subseteq \CD P$.  In view of Proposition~\ref{lemma1} and Lemma~\ref{lemma2}, we need only show that if $H > \wt N$ and $H \in \CD P$ then $H = C_P(\wt A_{\Delta})$ for some $\Delta \subseteq \Omega$.  However, if $H \in \CD P$ with $H > \wt N$ then order considerations and Proposition~\ref{lemma1} give a $\Delta \subseteq \Omega$ such that $C_P(H) = A_{\Delta}$.  The duality property of the Chermak-Delgado lattice yields $H = C_P(A_{\Delta})$ as required. \qed

\

As $\Phi (P) \leq \Z P$ for the resulting group $P$ we may reiterate the construction of this section $l$ times to produce a group with a Chermak-Delgado lattice that is a $2l+1$-string with center component isomorphic to $\CD N$ and all remaining components $m$-diamonds.  Notice that for any two subgroups $H, K$ in the resulting $m$-diamonds where there does not exist $M$ in the Chermak-Delgado lattice such that $H < M < K$ we have $\ord {K  H} = p^n$.  As a result:

\begin{cor} Let $l, m, n$ be integers with $l, m \geq 1$ and $n \geq 2$. There exists a $p$-group $P$ such that $P \in \CD P$ and $\CD P$ is a $2l$-string of $m$-diamonds.  Moreover, if $H, K \in \CD P$ and there does not exist $M \in \CD P$ with $H < M < K$ then $\ord {K:H} = p^n$. 

If there exists a non-abelian $p$-group $N$ such that $N \in \CD N$ and $\Phi (N) \leq \Z N$ with the appropriate indices between subgroups in $\CD N$ then one can construct a $p$-group $P$ with the same properties such that $\CD P$ is a $2l + 1$-string of $m$-diamonds.  In particular, if there exists a non-abelian $p$-group $J$ with $\Phi(J) \leq \Z J$ and $\CD J= \{J, \Z J\}$then the desired $2l + 1$-string of $m$-diamonds can be constructed. \end{cor}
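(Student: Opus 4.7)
The plan is to iterate Theorem~\ref{DiamondExt}. First I verify that the hypotheses $P \in \CD P$ and $\Phi(P) \leq \Z P$ required by Construction~\ref{GemExtension} are preserved by its output. The proof of Theorem~\ref{DiamondExt} establishes $m^*(P) = m(P)$, so $P \in \CD P$, and the discussion following Construction~\ref{GemExtension} notes $\Phi(P) \leq \Z P$ directly; hence the output of one application of the construction is a valid input for another.

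For the $2l$-string claim, I would take as base group $N_0$ a cyclic group of order $p$. Then $N_0$ is abelian, so $\CD{N_0} = \{N_0\}$ consists of a single subgroup and $\Phi(N_0) = 1 \leq \Z{N_0}$. Because $\Z{N_0} = N_0$, the central component of the $3$-string produced by Theorem~\ref{DiamondExt} collapses to the single point $\wt N_0 = \wt A$, and what remains is a $2$-string of $m$-diamonds. Inductively, if $N$ satisfies the hypotheses and $\CD N$ is a $2k$-string of $m$-diamonds with every covering index equal to $p^n$, then applying Construction~\ref{GemExtension} to $N$ yields a group whose Chermak-Delgado lattice is a $(2k+2)$-string of $m$-diamonds with the same covering-index property. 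After $l$ iterations we obtain the desired group.

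The covering-index property $|K:H| = p^n$ follows directly from Construction~\ref{GemExtension}: for $\Delta \subseteq \Omega$ and $i' \notin \Delta$, one has $\wt A_{\Delta \cup \{i'\}} = \wt A_{\Delta} \langle a_{i'1}, \dots, a_{i'n} \rangle$ with the $a_{i'j}$ of order $p$ and independent modulo $\wt A_{\Delta}$, giving $|\wt A_{\Delta \cup \{i'\}} : \wt A_{\Delta}| = p^n$; a dual computation handles the upper $m$-diamond. Since $\wt N = N \times Z \times A$ is a direct product of $N$ with an abelian group, $\CD{\wt N} \iso \CD N$ with all covering indices preserved, so the index $p^n$ propagates through the iteration.

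For the $2l+1$-string, the approach is identical but begins with a base group $N$ whose Chermak-Delgado lattice is itself an $m$-diamond with covering indices $p^n$; $l$ iterations then adjoin $2l$ further $m$-diamonds. In the specific case $\CD J = \{J, \Z J\}$ with $|J:\Z J| = p^n$, the base $\CD J$ is a $1$-diamond serving as the middle component, and iteration produces the desired $2l+1$-string. The main obstacle is confirming that the subgroup identifications between adjacent components are genuine equalities, in particular that the maximum $\wt A$ of the lower $m$-diamond coincides with the minimum of $\CD{\wt N}$ and dually on top; both follow directly from tracing the definitions in Construction~\ref{GemExtension}.
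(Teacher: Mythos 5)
Your overall strategy (iterate Theorem~\ref{DiamondExt}, checking that the hypotheses $P \in \CD P$ and $\Phi(P) \leq \Z P$ are preserved) matches the paper's, and your treatment of the $2l$-string and of the covering indices is fine. However, there is a genuine gap in your handling of the ``in particular'' clause. You take $\CD J = \{J, \Z J\}$ itself as the middle component, calling it a ``$1$-diamond.'' A $1$-diamond is a two-element chain, not an $m$-diamond, so for $m \geq 2$ your construction produces a \emph{mixed} $2l+1$-string (two-element chain in the middle, $m$-diamonds elsewhere), not the claimed uniform $2l+1$-string of $m$-diamonds. You correctly state in the preceding sentence that the base group needs a Chermak-Delgado lattice that is an $m$-diamond with indices $p^n$, but you never produce such a group from $J$.

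The missing step is the paper's: set $N = J \times J \times \cdots \times J$ with $m$ factors. By the Brewster--Wilcox direct product decomposition (cited in the paper as \cite{bw2012}), $\CD N \iso \CD J \times \cdots \times \CD J$, the direct product of $m$ two-element chains, which is exactly an $m$-dimensional cube, i.e.\ an $m$-diamond, with every covering index equal to $\ord {J : \Z J} = p^n$. Moreover $\Phi(N) = \Phi(J) \times \cdots \times \Phi(J) \leq \Z J \times \cdots \times \Z J = \Z N$ and $N \in \CD N$, so $N$ is a legitimate input to Theorem~\ref{DiamondExt}, and $l$ iterations then give the uniform $2l+1$-string. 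With this repair your argument is complete and agrees with the paper's.
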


\begin{proof} For a group $P$ with $\CD P$ being a $2l$-string of $m$-diamonds, let $N = 1$ and reiteratively apply Theorem~\ref{DiamondExt} $l$ times.  The resulting group is exactly as desired.

For a group $P$ with $\CD P$ being a $2l+1$-string of $m$-diamonds, one must start with a group $N$ in order to reiteratively apply Theorem~\ref{DiamondExt}. Suppose that there exists a non-abelian $p$-group $J$ such that $\Phi(J) \leq \Z J$ and $\CD J = \{J, \Z J\}$, with $\ord {J:\Z J} = p^n$.  Then $N = J \times J \times \cdots \times J$ ($m$ factors) has an $m$-diamond as its Chermak-Delgado lattice with the desired indices.  Moreover that $\Phi (J) \leq \Z J$ implies $\Phi (N) \leq \Z N$, therefore Theorem~\ref{DiamondExt} may be reiterated $l$ times to give the desired group. \end{proof}

Such a group $J$ was constructed in \cite[Proposition 3.3]{bhw2013} for $n = 3$.

\section{Quasiantichain Lattice Extension Theorem}\label{qac}

\begin{thm}\label{QACExt} Let $N$ be a $p$-group such that $N \in \CD N$ and $\Phi (N) \leq \Z N$.  For any integer $n \geq 2$ there exists a $p$-group $\mathcal{QE}(n)$ with $\Phi (P) \leq \Z P$ and a normal embedding of $N$ into $P$, resulting in $P \in \CD P$ and $\CD P$ being a mixed $3$-string with center component isomorphic to $\CD N$ and other components being lattice isomorphic to $\MM {p+3}$.\end{thm}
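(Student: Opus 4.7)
The plan is to adapt Construction~\ref{GemExtension} and the proof of Theorem~\ref{DiamondExt}, replacing the Boolean structure of the $m$-diamond (indexed by subsets of $\Omega$) with a projective structure that yields a quasiantichain of width $p+1$ (indexed by the $p+1$ one-dimensional subspaces of an $\mathbb{F}_p$-plane). The goal is to build an abelian subgroup $\wt A = \Z{\wt N}$ such that $\wt A/\Z P$ comes equipped with a decomposition into $n$ ``columns'' of rank $2$, and to arrange commutators so that the only proper non-trivial subgroups of $\wt A$ entering $\CD P$ are the $p+1$ subgroups cutting diagonally across the columns via a single common slope.

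Concretely, I take $\mathcal{QE}(n) = \wt N \rtimes \langle b_{ij} \mid 1 \leq i \leq 2,\ 1 \leq j \leq n \rangle$ with $\wt N = N \times Z \times A$ and $A = \langle a_{ij} \mid 1 \leq i \leq 2,\ 1 \leq j \leq n\rangle$ elementary abelian. The crucial commutator pattern is
\[
[a_{1j}, b_{1j}] = [a_{2j}, b_{2j}] = z_j, \quad [a_{1j}, b_{2j}] = [a_{2j}, b_{1j}] = 1, \quad [a_{ij}, b_{i'j'}] = 1 \text{ for } j \neq j',
\]
with $z_j$ new central generators, while commutators between $N$-generators and the $b_{ij}$ and among the $b_{ij}$ themselves are assigned distinct central generators exactly as in Construction~\ref{GemExtension}. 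For $\lambda \in \mathbb{F}_p$ and for $\lambda = \infty$ define
\[
\wt A_\lambda = \langle a_{1j} a_{2j}^\lambda \mid 1 \leq j \leq n\rangle \Z P, \qquad \wt A_\infty = \langle a_{2j} \mid 1 \leq j \leq n\rangle \Z P.
\]
Bilinearity of the commutator on $P/\Z P$ then gives $C_P(\wt A_\lambda) = \wt B_\lambda \wt N$ with $\wt B_\lambda = \langle b_{2j} b_{1j}^{-\lambda} \mid 1 \leq j \leq n\rangle$ for $\lambda \in \mathbb{F}_p$ and $\wt B_\infty = \langle b_{1j} \mid 1 \leq j \leq n\rangle$. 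A direct order calculation yields $m(\wt A) = m(\wt A_\lambda) = m(\wt N) = \ord{\wt A}\ord{\wt N}$ for every $\lambda$, so $m^*(P) \geq \ord{\wt A}\ord{\wt N}$.

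The core of the argument is then two lemmas mirroring Proposition~\ref{lemma1} and Lemma~\ref{lemma2}. The analog of Lemma~\ref{lemma2} transfers almost verbatim: if $H \in \CD P$ with $\ord H \leq \ord{\wt N}$ contains some $x = wy$ with $w$ a $b$-generator and $y \in \langle \{b_{ij}\} - \{w\}\rangle \wt N$, then $C_P(H) \leq C_P(w)$, whose order is too small to sustain $m(H) \geq \ord{\wt A}\ord{\wt N}$, forcing $H \leq \wt N$. The analog of Proposition~\ref{lemma1} is the heart of the argument: if $H \in \CD P$ with $\ord H \leq \ord{\wt A}$, then $H \in \{\Z P, \wt A\} \cup \{\wt A_\lambda : \lambda \in \mathbb{F}_p \cup \{\infty\}\}$. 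Its outer steps follow Proposition~\ref{lemma1}: first show $H \leq \wt A$ via centralizers of individual $b$-generators, then classify which subspaces of $A \cong \mathbb{F}_p^{2n}$ realize the maximum Chermak-Delgado measure.

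The main obstacle is this inner classification, since the subspace lattice of $\mathbb{F}_p^{2n}$ is far richer than the Boolean lattice used in the diamond case. The key technical claim is that if $H \in \CD P$ and $\overline H = H\Z P / \Z P$ is a non-trivial proper subspace of $A$ not equal to any $\overline{A_\lambda}$, then $m(H) < \ord{\wt A}\ord{\wt N}$, contradicting $H \in \CD P$. Verifying this comes down to a case analysis on the projections of $\overline H$ onto the $n$ columns $\langle a_{1j}, a_{2j}\rangle$: if some column projection is trivial, or some projection is a full column, or the projections are all $1$-dimensional but with non-constant slope, then commutator bookkeeping shows that the gain in $\ord{C_P(H)}$ fails to compensate for the loss in $\ord H$ relative to the pair $(\wt A_\lambda, \wt B_\lambda \wt N)$, so the Chermak-Delgado bound is not achieved. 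Once this is established, the rest of the proof assembles as in Theorem~\ref{DiamondExt}: duality supplies the top quasiantichain $\{\wt B_\lambda \wt N\}$, the embedding $\CD{\wt N} \cong \CD N \hookrightarrow \CD P$ via \cite{bw2012} gives the middle component, and the three pieces glue into the required mixed $3$-string with $P \in \CD P$ and $\Phi(P) \leq \Z P$.
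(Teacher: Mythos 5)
There is a genuine error: the commutator pattern you chose does not produce a quasiantichain. You set $[a_{1j},b_{1j}]=[a_{2j},b_{2j}]=z_j$ and declare \emph{all} cross-column commutators $[a_{ij},b_{i'j'}]$ with $j\neq j'$ trivial, using only $n$ new central generators $z_j$. This decouples the $n$ columns from one another, and the classification claim at the heart of your argument is then false. Concretely, take $H=\langle a_{11}\rangle \Z P$: the only $b$-generator not centralizing $a_{11}$ is $b_{11}$, so $\ord{C_P(H)}=p^{2n-1}\ord{\wt N}$ and $m(H)=p\,\ord{\Z P}\cdot p^{2n-1}\ord{\wt N}=\ord{\wt A}\,\ord{\wt N}$, i.e.\ $H$ attains the maximal measure and lies in $\CD P$. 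Likewise every ``non-constant slope'' subgroup $\langle a_{1j}a_{2j}^{\lambda_j}\mid 1\leq j\leq n\rangle\Z P$ with an arbitrary tuple $(\lambda_1,\dots,\lambda_n)$ has centralizer $\langle b_{1j}^{-\lambda_j}b_{2j}\rangle\wt N$ of order $p^{n}\ord{\wt N}$ and again attains the maximal measure. So the interval between $\Z P$ and $\wt A$ in $\CD P$ contains far more than $p+1$ atoms (on the order of $(p+1)^n$ slope-tuples plus rank-one subgroups), and the bottom component is nothing like $\MM{p+3}$. Your ``key technical claim'' --- that non-constant slopes and partial columns lose measure --- is exactly the statement that fails for your relations, so the gap is not merely an omitted verification.

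The paper's construction avoids this by coupling the columns: it introduces $n^2$ central generators $z_{ij}$ and imposes $[a_{ti},b_{tj}]=z_{ij}$ for \emph{all} pairs $(i,j)$ and both $t=1,2$ (with $[a_{i'j'},b_{ij}]=1$ only when $i'\neq i$, i.e.\ across the two rows). Thus each $a_{1i}$ fails to commute with every $b_{1j}$, so $C_P(\langle a_{11}\rangle\Z P)=\langle b_{2j}\mid j\rangle\wt N$ has order only $p^{n}\ord{\wt N}$ and the rank-one subgroup falls short of the maximum by a factor of $p^{n-1}$; and for an element $\prod b_{ij}^{\beta_{ij}}w$ to centralize $a_{1j'}a_{2j'}^{\lambda_{j'}}$ one needs $\beta_{1j}+\lambda_{j'}\beta_{2j}=0$ for all $j$ simultaneously for every $j'$, which forces the slope $\lambda_{j'}$ to be constant in $j'$ whenever the centralizer exceeds $\wt N$. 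That single linear-algebra computation is what collapses the candidate subgroups to the $p+1$ subgroups $A_k$ and yields $\MM{p+3}$. Your overall architecture (two lemmas mirroring Proposition~\ref{lemma1} and Lemma~\ref{lemma2}, duality for the top component, the direct-product result of \cite{bw2012} for the middle) matches the paper, but it cannot be executed on the group you defined; you would need to replace your relations with the fully coupled ones before the rest of the argument can go through.
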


\begin{cons}\label{QACExtension} Choose $n \geq 2$.  
\begin{enumerate}
\item For all $i, j$ such that $1 \leq j \leq n$ and $i \in \{1,2\}$, choose distinct $a_{ij}$ with order $p$ and define $A$ to be the direct product of all $\langle a_{ij} \rangle$. 


\item Suppose $N / \Z N = \langle x_1 \rangle \Z N \times \langle x_2 \rangle \Z N \times \cdots \times \langle x_r \rangle \Z N$ for a positive integer $r$.  For $i, j, k$ with $1 \leq i \leq m$, $1 \leq j \leq n$, and $1 \leq k \leq r$, choose distinct $z_{ijk}$, each of order $p$. For $i, j$ such that $1 \leq i, j \leq n$, choose distinct $z_{ij}$, each with order $p$. For every $u$ and $v$ with $(1,1) \leq u < v \leq (2,n)$ under the lexicographic ordering, choose a distinct $z_{v}^{u}$ of order $p$. Define:
$$
Z_N = \prod\limits_{\substack{1 \leq i \leq 2\\ 1 \leq j \leq n\\ 1 \leq k \leq r}} \langle z_{ijk} \rangle, \quad
Z_A = \prod\limits_{1 \leq i,j \leq n} \langle z_{ij} \rangle, \quad \textrm{ and } \quad
Z_B = \prod\limits_{(1,1) \leq u < v \leq (2,n)} \langle z_{v}^{u} \rangle.
$$
Let $Z = Z_N \times Z_B \times Z_A$ and $\wt N = N \times Z \times A$.  Notice that $\Z {\wt N} = \Z N \times Z \times A$.  Let $\wt A = \Z {\wt N}$.

\item For each $i, j$ such that $1 \leq i \leq 2$ and $1 \leq j \leq n$, choose distinct $b_{ij}$ with order $p$.  Define $P = \wt N \rtimes \langle b_{ij} \mid 1 \leq i \leq 2, 1 \leq j \leq n \rangle$.  Under this construction the following conjugation relations are observed:\end{enumerate}\end{cons}
\vspace{-10 pt}
\begin{gather*}
[x_k, b_{ij}] = z_{ijk} \textrm{ for all $i$, $j$, $k$}, [z, b_{ij}] = 1 \textrm{ for all $z \in \Z N \times Z$},\\
[b_{u},b_{v}] = z_v^u \textrm{ for all $(1,1) \leq u < v \leq (2,n)$},\\
[a_{i'j'},b_{ij}] = 1 \textrm{ for all $i' \ne i$, and } [a_{ti},b_{tj}] = z_{ij} \textrm{ for $t = 1$ or $2$.}\\
\end{gather*}

\vspace{-10 pt}
We show that $P$ satisfies the requirements of Theorem~\ref{QACExtension}. The main difference between this construction and that of Section~\ref{nDiamonds} is in the generators of $Z_A$.  In the latter, $a_{ti}^{b_{tj}}$ resulted in a different central element for each choice of $t$.  In the present construction $a_{12}^{b_{11}} = a_{22}^{b_{21}}$, for example.  The effect of ``glueing'' the commutators together in this manner is reminiscent of construction of a single quasiantichain as given in \cite{bhw2013a}.

The construction clearly dictates that $\Phi (P) \leq \Z P$ and shows $\Z P$ is elementary abelian.  Counting generators and commutators shows that $\ord {\Z P}$ has exponent $n(3n - 2r +1) + z$ where $\ord {\Z N} = p^z$ and $\ord {P / \Z P}$ has exponent $4n + r$.  

It's straightforward to show that $C_P(\wt N) = \wt A$ and vice versa; this gives $m^*(P) \geq m(P) = \ord P \ord \Z P = \ord {\wt N} \ord {\wt A} = m(\wt A)$.  Other subgroups of interest include $A_k = \langle a_{1j}a_{2j}^k \mid 1 \leq j \leq n \rangle$ for $1 \leq k \leq p - 1$ and $A_{p} = \langle a_{2j} \mid 1 \leq j \leq n \rangle$.  Each of these abelian subgroups has index $p^n$ in $\wt A$.  

We show, through a series of lemmas, that $\CD P = \{\Z P, P, A_k, C_P(A_k) \mid 0 \leq k \leq p \} \cup \CD {\wt N}$.  One $\MM {p+3}$ is formed by $\{ \Z P, A_k \wt A \mid 0 \leq k \leq p \}$ and the second by $\{ \wt N, C_P(A_k), P \mid 0 \leq k \leq p\}$.  Notice that $\CD {\wt N} \iso \CD N$ because ${\wt N}$ is the direct product of $N$ with abelian groups \cite{bw2012}.  

We begin by examining $C_P(A_k)$ for $0 \leq k \leq p$.

\begin{lem} Let $A_k$ be as described.  The centralizer of $A_0$ is $\langle b_{2j} \mid 1 \leq j \leq n \rangle \wt N$ and $C_P(A_p) = \langle b_{1j} \mid 1 \leq j \leq n \rangle \wt N$.  For $k$ with $1 \leq k \leq p - 1$, the centralizer of $A_k$ is $C_P(A_k) = \langle b_{1j}^kb_{2j}^{-1} \mid 1 \leq j \leq n \rangle \wt N$.\end{lem}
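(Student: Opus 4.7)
The plan is to reduce the problem to a linear system over $\mathbb{F}_p$. First I observe that for every $k \in \{0, 1, \ldots, p\}$ the subgroup $A_k$ is contained in $\wt A = \Z{\wt N}$, so $\wt N \leq C_P(A_k)$. Since $P = \wt N \rtimes \langle b_{ij} \rangle$, every coset of $\wt N$ in $P$ has a representative of the form $w = \prod_{i,j} b_{ij}^{c_{ij}}$ with $c_{ij} \in \{0, 1, \ldots, p-1\}$. So the task is to identify the tuples $(c_{ij})$ for which $[a, w] = 1$ for every generator $a$ of $A_k$.

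Because $\Phi(P) \leq \Z P$, the group $P$ has nilpotence class at most $2$ and commutators are bilinear. The defining relations give $[a_{1j}, b_{1j'}] = z_{jj'}$, $[a_{2j}, b_{2j'}] = z_{jj'}$, and $[a_{1j}, b_{2j'}] = [a_{2j}, b_{1j'}] = 1$. For $1 \leq k \leq p-1$ and a generator $a_{1j} a_{2j}^k$ of $A_k$, bilinearity yields
\[
[a_{1j} a_{2j}^k,\, w] \;=\; \prod_{j'=1}^{n} z_{jj'}^{\,c_{1j'} + k\, c_{2j'}}.
\]
The $z_{jj'}$ are independent generators of the elementary abelian group $\Z P$, so this commutator is trivial precisely when $c_{1j'} + k c_{2j'} \equiv 0 \pmod{p}$ for every $j'$. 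Since the condition does not involve $j$, insisting that $w$ centralize every generator of $A_k$ imposes the same system. The coset space of solutions is spanned by $\{b_{1j'}^{k} b_{2j'}^{-1} \mid 1 \leq j' \leq n\}$ (observe that $b_{1j'}^{k} b_{2j'}^{-1}$ and $b_{2j'}^{-1} b_{1j'}^{k}$ differ by $[b_{1j'}, b_{2j'}]^{\pm k} \in \Z P \leq \wt N$, so the choice of ordering is immaterial), which gives the claimed centralizer.

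The two boundary cases follow the same template. For $A_0 = \langle a_{1j} \mid 1 \leq j \leq n\rangle$, only $[a_{1j}, b_{1j'}] = z_{jj'}$ contributes, so the linear conditions read $c_{1j'} \equiv 0$ with no constraint on $c_{2j'}$, yielding $C_P(A_0) = \langle b_{2j} \mid 1 \leq j \leq n \rangle \wt N$. Symmetrically for $A_p = \langle a_{2j} \mid 1 \leq j \leq n \rangle$, only $[a_{2j}, b_{2j'}] = z_{jj'}$ contributes, forcing $c_{2j'} \equiv 0$ and giving $C_P(A_p) = \langle b_{1j} \mid 1 \leq j \leq n\rangle \wt N$. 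The argument is entirely mechanical; the only place where one could slip is in correctly pairing the two commutator rules $[a_{ti}, b_{tj}] = z_{ij}$ and $[a_{i'j'}, b_{ij}] = 1$ for $i' \neq i$ with the indices appearing in each generator of $A_k$, and in remembering that working modulo $\wt N$ absorbs the central commutators between the $b_{ij}$.
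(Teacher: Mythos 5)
Your proof is correct and follows essentially the same route as the paper: both reduce the computation to the linear condition $c_{1j'} + k\,c_{2j'} \equiv 0 \pmod p$ coming from bilinearity of commutators and the independence of the $z_{jj'}$ in the elementary abelian center. The only cosmetic difference is that you derive both containments at once from the linear system, whereas the paper first verifies $\langle b_{1j}^k b_{2j}^{-1}\rangle\wt N \leq C_P(A_k)$ directly and then solves the system for the reverse inclusion.
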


\begin{proof} The centralizer of $A_0$ and $A_p$ follow immediately from the conjugation relations given in the construction of $P$.  The structure of $C_P(A_k)$ for $1 \leq k \leq p - 1$ is less obvious; first observe the following: 
$$
[a_{1j}a_{2j}^k, b_{1j}^kb_{2j}^{-1}] = [a_{1j},b_{1j}]^k [a_{2j},b_{2j}]^{-k} = z_{jj}^k z_{jj}^{-k} = 1
$$
for $j$ such that $1 \leq j \leq n$.  Thus $\langle b_{1j}^kb_{2j}^{-1} \mid 1 \leq j \leq n \rangle \wt N \leq C_P(A_k)$.

Let $x \in C_P(A_k)$.  Then $x = b_{11}^{\alpha_{11}}b_{12}^{\alpha_{12}} \cdots b_{2n}^{\alpha_{2n}} y$ where $0 \leq \alpha_{ij} \leq p - 1$ for $1 \leq i \leq 2$, $1, \leq j \leq n$ and $y \in \wt N$. Given the linearity of the commutator, we may assume without loss of generality that $y = 1$ and consider the following commutator:
$$
[a_{1j'}a_{2j'}^k, b_{11}^{\alpha_{11}}b_{12}^{\alpha_{12}} \cdots b_{2n}^{\alpha_{2n}}] = \prod\limits_{1 \leq j \leq n} [a_{1j'},b_{1j}]^{\alpha_{1j}} [a_{2j'},b_{2j}]^{k \alpha_{2j}} = \prod\limits_{1 \leq j \leq n} z_{j'j}^{\alpha_{1j}} z_{j'j}^{k\alpha_{2j}}.
$$
This commutator equals 1 if and only if $\alpha_{1j} + k \alpha_{2j} = 0$ for all $j$.  Therefore there exists $n$ linear equations, each of 2 variables and solution space $\langle \big[ \begin{array}{c}
        k\\
        -1\\
        \end{array} \big] \rangle$, as desired. Thus $C_P(A_k) = \langle b_{1j}^kb_{2j}^{-1} \mid 1 \leq j \leq n \rangle$. \end{proof}

This tells us that $m(P) = m(\wt A) = m(A_k)$ for all $k$ with $0 \leq k \leq p$.  Identifying the centralizers of the subgroups $A_k$ is the first step in showing that if $\wt A \in \CD P$ then there is a component with maximum $\wt A$ that is lattice isomorphic to $\MM {p+3}$ and a second mirrored in the structure above $\wt N$.  The next step is to establish the quasiantichain structure; in a manner similar to that of Section~\ref{nDiamonds} we study subgroups in $\CD P$ that have order less than $\ord {\wt A}$.

\begin{prop}\label{qaclem1} If $H \in \CD P$ and $\ord H < \ord {\wt A}$ then $H \in \{ \wt A, \Z P, A_k \mid 0 \leq k \leq p\}$.  \end{prop}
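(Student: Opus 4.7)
My plan mirrors the structure of Proposition~\ref{lemma1}. Given $H \in \CD P$ with $\ord H < \ord{\wt A}$, the identity $\ord H \ord{C_P(H)} = m^*(P) \geq m(\wt A) = \ord{\wt A} \ord{\wt N}$ gives $\ord{C_P(H)} > \ord{\wt N}$. I would first show $H \leq \wt A$, and then classify the subgroups of $\wt A$ that achieve $m^*(P)$.

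For the reduction $H \leq \wt A$, suppose for contradiction $x \in H - \wt A$. Using linearity of the commutator and the linear independence of the $z$-generators of $\Z P$, I can write $x = wy$ for a single non-central generator $w$ (either some $b_{uv}$ or some $x_k$) with $y$ in the complementary factor, and obtain $\ord{C_P(x)} \leq \ord{C_P(w)}$ exactly as in Proposition~\ref{lemma1}. If $w = b_{uv}$ the defining relations give $C_P(w) = \langle w \rangle \langle a_{i'j} : i' \ne u \rangle \Z P$, of order $p^{n+1} \ord{\Z P}$; if $w = x_k$ the $2n$ independent relations $[x_k, b_{ij}] = z_{ijk}$ give $\ord{C_P(w)} = \ord{\wt N}$. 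Either case violates $\ord{C_P(H)} > \ord{\wt N}$ once $n \geq 2$ (using $\Phi(N) \leq \Z N$ to make $N/\Z N$ elementary abelian of rank $r$, so $\ord{\wt N}/\ord{\Z P} = p^{r + 2n} \geq p^{n+2}$). Hence $H \leq \wt A$.

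For the classification inside $\wt A$, since the minimum of $\CD P$ contains $\Z P$, I may write $H = \Z P \cdot K$ with $K \leq A$. Because $\wt A$ is abelian, $\wt N \leq C_P(H)$, so the condition $m(H) = m^*(P)$ reduces to $\dim_{\mathbb{F}_p} K + e = 2n$, where $e$ is the $\mathbb{F}_p$-dimension of the image of $C_P(K)$ in $P/\wt N$. Viewing $A \cong V_1 \oplus V_2$ with $V_s = \langle a_{sj} \mid 1 \leq j \leq n \rangle$ and representing $K$ as the row span of a $d \times 2n$ matrix $M = [M_1 \mid M_2]$ over $\mathbb{F}_p$, the glueing $[a_{1i}, b_{1j}] = z_{ij} = [a_{2i}, b_{2j}]$ shows that a product $\prod b_{ij}^{\alpha_{ij}}$ centralizes $K$ iff each pair $(\alpha_{1j}, \alpha_{2j})$ lies in $U = \{(c,d) \in \mathbb{F}_p^2 : c M_1 + d M_2 = 0\}$. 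Consequently $e = n \dim U$, and $d + n \dim U = 2n$ forces one of three cases: $\dim U = 2$ yields $K = 0$ and $H = \Z P$; $\dim U = 0$ yields $K = A$ and $H = \wt A$, excluded by strict inequality; $\dim U = 1$ yields $d = n$ with the rows of $M_1, M_2$ satisfying a single nontrivial relation $\alpha M_1 + \beta M_2 = 0$, identifying $K$ with one of the $p+1$ diagonal subgroups $A_k$ parameterized by $[\alpha : \beta] \in \mathbb{P}^1(\mathbb{F}_p)$.

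The main obstacle I anticipate lies in this third step: one must verify that the $\dim U = 1$ case corresponds bijectively to the $p + 1$ diagonal subgroups $A_k$ via the projective line $\mathbb{P}^1(\mathbb{F}_p)$, with $A_0, \ldots, A_{p-1}$ arising from finite slopes and $A_p$ from the slope at infinity. This projective structure is created precisely by the glueing relations of Construction~\ref{QACExtension} and is exactly what produces the width-$(p+1)$ quasiantichain component in the final Chermak-Delgado lattice.
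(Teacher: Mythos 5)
Your argument is correct in substance, but it reaches the conclusion by a genuinely different route than the paper, and one justification needs patching. For the reduction to $H \leq \wt A$, the paper never examines elements of $H$ outside $\wt A$; it instead uses $\ord {C_P(H)} > \ord {\wt N}$ to produce elements $x, x' \in C_P(H) - \wt N$ and invokes duality, $H = C_P(C_P(H)) \leq C_P(x) \cap C_P(x') \leq \wt A$, with a separate contradiction in the borderline case $\ord {C_P(H)} = p \ord {\wt N}$. You bound $\ord {C_P(x)}$ for $x \in H - \wt A$ directly, which avoids that case split entirely. The caveat: your appeal to ``linear independence of the $z$-generators \dots exactly as in Proposition~\ref{lemma1}'' is not literally available here, since the glueing $[a_{1i},b_{1j}] = z_{ij} = [a_{2i},b_{2j}]$ is precisely a linear dependence among commutators of generators. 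The inequality $\ord{[x,P]} \geq \ord{[w,P]}$ does survive, but it must be argued by splitting $[x,P]$ along the direct factors $Z_B$, $Z_A$, $Z_N$ of $Z$ and bounding each projection separately (the glueing lives entirely inside $Z_A$, so the $Z_B$- and $Z_N$-projections behave as in Section~\ref{nDiamonds}); likewise, for $x \in \wt N - \wt A$ the clean statement is $C_P(x) \leq \wt N$, read off from the $Z_N$-component of $[x,\beta]$ for $\beta$ a nontrivial product of the $b_{ij}$, since $\ord{C_P(x)} \leq \ord{C_P(x_k)}$ need not hold verbatim when $x$ involves several $x_k$. For the classification inside $\wt A$, your formulation $K = \mathrm{rowspan}[M_1 \mid M_2]$, $e = n \dim U$, and the equation $d + n\dim U = 2n$ is a cleaner, global version of what the paper does element-by-element (fixing $x \in H$ and $z \in C_P(H) - \wt N$ and extracting the relations $\alpha_{1i}(\beta_{11} + \cdots + \beta_{1n}) + \alpha_{2i}(\beta_{21} + \cdots + \beta_{2n}) = 0$); your version buys a transparent count showing that exactly the $p+1$ diagonal subgroups occur, and it simultaneously verifies $m(H) \leq m(\wt A)$ for every $\Z P \leq H \leq \wt A$, a point the paper leaves largely implicit.
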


\begin{proof} Let $H \in \CD P$ and $\ord H \leq \ord {\wt A}$.  Then $\ord {C_P(H)} \geq \ord {\wt N}$ because $m^*(P) = \ord H \ord {C_P(H)} \geq \ord {\wt A} \ord {\wt N}$.   If $\ord {C_P(H)} = \ord {\wt N}$ then $H = \wt A$.  

Suppose, instead, that $\ord {C_P(H)} > \ord {\wt N}$; there exists $x \in C_P(H) - \wt N$ with $x = b_{11}^{\alpha_{11}}b_{12}^{\alpha_{12}} \cdots b_{2n}^{\alpha_{wn}} y$ for $y \in \wt N$. By counting the non-central generators of $\wt A$ that do not commute with the $b_{ij}$ in $x$, notice:
$$
\ord {\wt A: C_{\wt A}(x)} = \ord {x^{\wt A}} = \ord {[x, \wt A]} \geq p^n.
$$
Thus $\ord {C_{\wt A}(x)} \leq p^{1-n} \ord {\wt A}$.  Additionally, $C_{\wt A}(x) = C_{\wt N}(x)$ and the $b_{ij}$ do not commute with one another.  This gives $C_P(x) = \langle x \rangle C_{\wt A} (x)$.  However, $\ord x \leq p^2$ and $\ord { \langle x \rangle \cap C_{\wt A} (x)} \leq p$; therefore $\ord {C_P(x)} \leq p^{1-n} \ord {\wt A}$.

By choice of $H$, we know $m^*(P) = \ord {\wt H} \ord {\wt C_P(H)} \geq \ord {\wt A} \ord {\wt N}$.  The duality property of the Chermak-Delgado lattice gives $H = C_P(C_P(H)) \leq C_P(x)$.  These two facts together give 
$$\ord {C_P(H)} \geq p^{n-1} \ord {\wt N}.$$
If $\ord {C_P(H)} > p \ord {\wt N}$ then there exists an $x' \in C_P(H) - \langle x \rangle \wt N$.  Note that $[x, x'] = 1$ and therefore $H \leq C_P(x) \cap C_P(x') \leq \wt A$, as desired.

Suppose instead that $\ord {C_P(H)} = p \ord {\wt N}$.  From $\ord {C_P(H)} \geq p^{1-n} \ord {\wt N}$ we know $n = 2$.  However $\ord H \leq p \ord {C_{\wt A} (x)}$  and $C_{\wt A} (x) < \wt A$ gives the implication:
$$
\frac{\ord H}{p} \leq \ord {C_{\wt A} (x)} < \ord {\wt A} \implies \ord H \leq p^2 \ord {\wt A}.
$$
Then $\ord H \ord {C_P(H)} < \ord {\wt A} \ord {\wt N}$, contradicting the choice of $H$.

Thus we have shown if $H \in \CD P$ with $\ord H \leq \ord {\wt A}$ then $H \leq \wt A$.  Notice that $m(\wt A) = m(\Z P)$ so we consider $\Z P < H < \wt A$, showing that such $H \in \CD P$ must also have the same measure as $\wt A$.  Let $x \in H$ and write $x = a_{11}^{\alpha_{11}} a_{12}^{\alpha_{12}} \cdots a_{2n}^{\alpha_{2n}} y$ for $y \in \Z P$ and $0 \leq \alpha_{ij} \leq p-1$.  Since $x \in \wt A$ we know that $\wt N \leq C_P(x)$; suppose $z \not\in \wt N$ centralizes $x$.  Let $z = b_{11}^{\beta_{11}} \cdots b_{2n}^{\beta_{2n}}w$ where $w \in \wt N$ and $0 \leq \beta_{ij} \leq p-1$.  The bilinearity of the commutator allows for the computation of $[x,z]$, resulting in 
$$
\prod\limits_{1 \leq i \leq n} (z_{i1}^{\beta_{11}}z_{i2}^{\beta_{12}} \cdots z_{in}^{\beta_{1n}})^{\alpha_{1i}} (z_{i1}^{\beta_{21}}z_{i2}^{\beta_{12}} \cdots z_{in}^{\beta_{2n}})^{\alpha_{2i}}.
$$
If $[x,z] = 1$ then the exponents for each fixed $i$ give a linear equation 
$$
\alpha_{1i} (\beta_{11} + \beta_{12} + \cdots + \beta_{1n}) + \alpha_{2i}(\beta_{21} + \beta_{22} + \cdots + \beta_{2n}) = 0.
$$
Each of the $i$ linear equations must be solved simultaneously in order to produce $z \in C_P(x)$ with $z \not\in \wt N$.  This corresponds to a $2 \times n$ consistent matrix with a unique solution, requiring $\alpha_{1i}$ be a negative scalar multiple of $\alpha_{2i}$. Hence there exists $k \in \{ 0, 1, \dots, p \}$ such that $x \in A_k$.  Moreover, any other $x' \in H$ must fit this same form in order to commute with $z$ and hence $H \leq A_k$ and $C_P(H) = C_P(A_k)$.  If $H < A_k$ then $m(H) < m(A_k)$; because $H \in \CD P$ we know then that $H = A_k$.  

As desired, if $H \in \CD P$ with $\ord H \leq \ord {\wt A}$ then $H \in \{ \Z P, \wt A, A_k \mid 0 \leq k \leq p \}$. \end{proof}

Therefore if there is a subgroup of $\wt A$ in $\CD P$ then there is a component that is lattice isomorphic to $\MM {p+3}$ directly above $\Z P$ in $\CD P$ as well as a second quasiantichain in $\CD P$ between $\wt N$ and $P$.  We turn now to the structure of $\CD P$ between $\wt A$ and $\wt N$.

\begin{lem}\label{qaclem2} If $H \in \CD P$ and $\ord H \leq \ord {\wt N}$ then $H \leq \wt N$.  If $H \in \CD P$ and $\wt A \leq H \leq \wt N$ then $m_P(H) = m_{\wt N}(H)$.  \end{lem}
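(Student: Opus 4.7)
The plan is to handle the two assertions separately; both are quick once one invokes the centralizer bound developed inside the proof of Proposition~\ref{qaclem1}.

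For the first assertion I would argue by contradiction. Suppose $H \in \CD P$ with $\ord H \leq \ord{\wt N}$ but $H \not\leq \wt N$, so some $x = b_{11}^{\alpha_{11}} \cdots b_{2n}^{\alpha_{2n}} y \in H$ has $y \in \wt N$ and at least one $\alpha_{ij} \neq 0$. From $H \in \CD P$ together with $m^*(P) \geq m_P(\wt A) = \ord{\wt A}\ord{\wt N}$, the lower bound $\ord{C_P(H)} \geq \ord{\wt A}$ is immediate. On the other hand, the key estimate from the proof of Proposition~\ref{qaclem1} --- obtained from $\ord{\wt A : C_{\wt A}(x)} \geq p^n$ (the non-trivial commutators of generators are linearly independent in $\Z P$) together with $C_P(x) = \langle x \rangle C_{\wt A}(x)$ and $\ord x \leq p^2$ --- gives $\ord{C_P(x)} \leq p^{1-n}\ord{\wt A}$. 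Since $x \in H$ forces $C_P(H) \leq C_P(x)$, we obtain $\ord{C_P(H)} \leq p^{1-n}\ord{\wt A} \leq p^{-1}\ord{\wt A} < \ord{\wt A}$ for $n \geq 2$, contradicting the lower bound. Hence $H \leq \wt N$.

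For the second assertion no case analysis is needed. Because $\wt A \leq H$, the duality $C_P(\wt A) = \wt N$ already established in the preamble yields $C_P(H) \leq C_P(\wt A) = \wt N$, so $C_P(H) = C_P(H) \cap \wt N = C_{\wt N}(H)$; consequently $m_P(H) = \ord H \ord{C_P(H)} = \ord H \ord{C_{\wt N}(H)} = m_{\wt N}(H)$. The only real potential obstacle --- controlling $\ord{C_P(x)}$ for $x \notin \wt N$ --- was already overcome inside Proposition~\ref{qaclem1}, so the present lemma amounts to redeploying that estimate under the weaker hypothesis $\ord H \leq \ord{\wt N}$ rather than $\ord H \leq \ord{\wt A}$, and then reading the second statement directly off the duality $C_P(\wt A) = \wt N$.
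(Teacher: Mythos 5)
Your proposal is correct and follows essentially the same route as the paper: the first assertion is proved by the same contradiction, pitting the lower bound $\ord{C_P(H)} \geq \ord{\wt A}$ against the estimate $\ord{C_P(x)} \leq p^{1-n}\ord{\wt A}$ for $x \in H - \wt N$ carried over from Proposition~\ref{qaclem1}. For the second assertion your appeal to $C_P(H) \leq C_P(\wt A) = \wt N$ is a slightly cleaner packaging of the paper's commutator argument, but it is the same underlying idea.
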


\begin{proof} Let $H \in \CD P$ and $\ord H \leq \ord {\wt N}$.  Suppose, by way of contradiction, that there exists $x \in H - \wt N$.  Similar consideration as in the proof of Lemma~\ref{qaclem1} gives
$$
\ord {C_P(H)} \leq \ord {C_{\wt A}(x)} \leq p^{1-n} \ord {\wt A}.
$$
Then $m^*(P) = \ord H \ord {C_P(H)} < \ord {\wt A} \ord {\wt N}$.  This contradiction implies that $H \leq \wt N$. 

Now suppose that $\wt A \leq H \leq \wt N$ and $H \in \CD P$.  The generators of $\wt N$ and the elements $\{b_{ij}\}$ do not commute. Combined with the bilinearity of commutators in $P$, this implies no non-trivial element of $H$ can commute with an element from $P - \wt N$.  Thus $C_P(H) = C_{\wt N}(H)$, giving $m^*(P) = \ord H \ord {C_P(H)} = m_{\wt N}(H)$. \end{proof}

We are now prepared to prove that $P$ satisfies the description from Theorem~\ref{QACExt}.

\

\noindent
{\it Proof of Theorem~\ref{QACExt}.}  Let $P$ be described as above.  We first determine $m^*(P)$ by showing all abelian subgroups in $\CD P$ have measure equal to $m(P)$.  Suppose $H \geq \Z P$ and $H \in \CD P$.  Assume, by way of contradiction, that $H \not\leq \wt N$; there exists $x = b_{ij}^{\alpha} y \in H$ where $1 \leq \alpha \leq p$ and $y \in \wt N$.  If $i = 1$ then $C_P(b_{1j}) = A_p$ and if $i = 2$ then $C_P(b_{2j}) = A_0$.  Since $C_P(H) \leq C_P(b_{ij})$, using the bilinearity of the commutator, we know that $C_P(H) < \wt A$.  This directly contradicts the assumption that $H$ is abelian. Therefore $H \leq \wt N$.  If $\wt A < H < \wt N$ then $m_P(H) = m_{\wt N}(H)$.  This forces $m_P(H) = m_P(\wt N) = m_P(P)$.  If $H \leq \wt A$ then Proposition~\ref{qaclem1} immediately gives $m(H) = m(P)$.  

Because the minimum subgroup of $\CD P$ is abelian and contains $\Z P$, we know that $m^*(P) = m(P)$ and that minimum is $\Z P$.  This additionally implies that $\{ \Z P, P, A_k, C_P(A_k) \mid 0 \leq k \leq p\} \cup \CD {\wt N}$ is a subset of $\CD P$.  Proposition~\ref{qaclem1} and Lemma~\ref{qaclem2} give that no subgroups of $\wt N$ other than those listed can be members of $\CD P$.  To finish, suppose there exists $H \in \CD P$ such that $\wt N < H < P$.  The centralizer calculation of the preceeding paragraph holds and shows that $C_P(H) < \wt A$.  The duality property of the Chermak-Delgado lattice then allows for applying Proposition~\ref{qaclem1} to $C_P(H)$ to see that $C_P(H) = A_k$ or $C_P(H) = \Z P$.  If the former is the case then $H = C_P(C_P(H)) = C_P(A_k)$ and if the latter is true then $H = P$.  Therefore $\CD P$ is as described by the statement of the theorem. \qed

\begin{cor}\label{string} Let $l, n$ be integers with $l \geq 1$ and $n \geq 2$. There exists a $p$-group $P$ such that $P \in \CD P$ and $\CD P$ is a $2l$-string with components that are lattice isomorphic to $\MM {p+3}$, for any positive integer $l$.  Moreover, if $H, K \in \CD P$ such that there does not exist $M \in \CD P$ with $H < M < K$ then $\ord {K:H} = p^n$. 

If there exists a non-abelian $p$-group $N$ with $\Phi (N) \leq \Z N$ and $N \in \CD N$ such that $\CD N$ is $\MM {p+3}$ with indices $p^n$ then there exists a non-abelian $p$-group $P$ with $P \in \CD P$ and $\CD P$ being a $2l+1$-string with components that are lattice isomorphic to $\MM {p+3}$ and having the appropriate indices. \end{cor}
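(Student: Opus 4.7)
The proof plan is to iteratively apply Theorem~\ref{QACExt}, mirroring the pattern used for the $m$-diamond corollary following Theorem~\ref{DiamondExt}. Each application takes a $p$-group $N$ satisfying $N \in \CD N$ and $\Phi(N) \leq \Z N$ and produces a new $p$-group $P$ of the same type, whose Chermak-Delgado lattice is a mixed $3$-string consisting of a center component isomorphic to $\CD N$ with a copy of $\MM{p+3}$ adjoined at both the top and the bottom. Because the output inherits both hypotheses, the process can be chained any number of times while keeping $n$ fixed.

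For the $2l$-string, I would start with the trivial group $N = 1$; then $1 \in \CD 1$ and $\Phi(1) \leq \Z 1$ trivially, and the center component of the first output has a single element, which collapses the $3$-string to a genuine $2$-string of $\MM{p+3}$. Iterating $l$ times then yields a $2l$-string of $\MM{p+3}$. For the $2l+1$-string, I would take the hypothesized non-abelian $N$ with $\CD N \cong \MM{p+3}$ and the correct indices as the base, and apply Theorem~\ref{QACExt} exactly $l$ times; the final lattice is a uniform $(2l+1)$-string of $\MM{p+3}$ with $\CD N$ sitting in the center.

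The index claim requires a short verification inside Construction~\ref{QACExtension}. For each $k$ with $0 \leq k \leq p$, the subgroup $A_k$ (extended by $\Z P$) is generated modulo $\Z P$ by $n$ elements of order $p$, while $\wt A$ is generated modulo $\Z P$ by all $2n$ of the $a_{ij}$; hence $\ord{A_k : \Z P} = p^n = \ord{\wt A : A_k}$. Duality in the Chermak-Delgado lattice, encoded in the identity $\ord{K : H} = \ord{C_P(H) : C_P(K)}$ for $H < K$ in $\CD P$, transfers the same indices to the upper quasiantichain. At the two junctions $\wt A$ and $\wt N$ the indices agree with those inherited from $\CD N$ by induction, and in the $2l$ case there is no nontrivial junction to worry about. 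The main thing to verify is that no extraneous subgroups sneak into $\CD P$ at the junction levels during iteration, but this is precluded by the concluding paragraph of the proof of Theorem~\ref{QACExt}, which identifies $\CD P$ as exactly $\{\Z P, P, A_k, C_P(A_k) \mid 0 \leq k \leq p\} \cup \CD{\wt N}$. Beyond this bookkeeping the proof is essentially automatic.
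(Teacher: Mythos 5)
Your proposal is correct and matches the paper's own (very brief) argument: reiteratively apply Theorem~\ref{QACExt}, taking $N = 1$ for the $2l$-string and the hypothesized quasiantichain group $N$ for the $2l+1$-string. The extra detail you supply on the indices $\ord{A_k \Z P : \Z P} = \ord{\wt A : A_k \Z P} = p^n$ and their transfer by duality is a correct elaboration of what the paper leaves implicit.
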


The proof of Corollary~\ref{string} is a matter of reiteratively applying Theorem~\ref{QACExt}.  For an even length string let $N = 1$.  The matter of finding a $p$-group $N$ with $N \in \CD N$ and $\CD N$ being a quasiantichain with the needed indices is still open.  Such an $N$ exists when $n = 3$, as described in \cite{bhw2013a}.

\subsection*{Acknowledgements}

The authors would like to thank Ben Brewster of Binghamton University for posing the original challenge to find a group $P$ with $\CD P$ a $2$-string of $2$-diamonds.  The second and fourth authors would like to thank Qinhai Zhang and Shanxi Normal University for the gracious invitation and support during their visit.  

This work was supported by the National Natural Science Foundation of China (grant number 11071150) and the Natural Science Foundation of Shanxi Province (grant numbers 2012011001-3 and 2013011001-1).

\bibliographystyle{amsplain}
\bibliography{references}

\end{document}